\newcommand{\N}{\mathbb{N}}
\newcommand{\R}{\mathbb{R}}
\newcommand{\y}{\bar{y}}
\newcommand{\Ir}{{_{t}}\textsl{I}_b^\alpha}
\newcommand{\Ilc}{{_{a}}\textsl{I}_t^{1-\alpha}}
\newcommand{\Irc}{{_{t}}\textsl{I}_b^{1-\alpha}}
\newcommand{\Ilct}{{_{a}}\textsl{I}_{\tau}^{1-\alpha}}
\newcommand{\Dr}{{_{t}}\textsl{D}_b^\alpha}
\newcommand{\Dcl}{{^{C}_{a}}\textsl{D}_t^\alpha}
\newcommand{\Dclt}{{^{C}_{a}}\textsl{D}_{\tau}^\alpha}
\newcommand{\K}{K_P}
\newcommand{\Hl}{{_{a}}\textsl{J}_t^\alpha}
\newcommand{\Hr}{{_{t}}\textsl{J}_b^\alpha}
\newcommand{\Ilcp}{{_{a_i}}\textsl{I}_{t_i}^{1-\alpha_i}}
\newcommand{\Ircp}{{_{t}}\textsl{I}_{b_i}^{1-\alpha_i}}
\newcommand{\Drp}{{_{t_i}}\textsl{D}_{b_i}^{\alpha_i}}
\newcommand{\Dclp}{{^{C}_{a_i}}\textsl{D}_{t_i}^{\alpha_i}}
\newcommand{\PKi}{K_{P_{i}}}
\newcommand{\PAi}{A_{P_{i}}}
\newcommand{\PBi}{B_{P_{i}}}
\newcommand{\fonction}[5]{
\begin{array}[t]{lrcl}
#1 :&#2 & \longrightarrow & #3\\
& #4 & \longmapsto & #5
\end{array}
}
\begin{document}

\seambfrfp{2014}{38}{1}{\pageref{pgCAbdio}}

\vspace*{28mm}
\pagestyle{myheadings}
\markboth
{\hfill{\small\rm T. Odzijewicz and D. F. M. Torres}}
{{\small\rm The Generalized Fractional Calculus of Variations} \hfill}


\title{The Generalized Fractional Calculus of Variations}


\dzh

\author{Tatiana Odzijewicz \ and \ Delfim F. M. Torres}

\no{\smaller Center for Research and Development in Mathematics and Applications (CIDMA),
Department of Mathematics, University of Aveiro, 3810--193 Aveiro, Portugal.\\
Email: tatianao@ua.pt; delfim@ua.pt}


\dzh

\no{Received 4 January 2014\\
Accepted 7 January 2014}
\flh
\no{Communicated by K. P. Shum}
\flh
\no{\smallerbf AMS Mathematics Subject Classification (2010):} {\smaller\ 26A33, 49K05, 49K21}
\flh


\no{\smallerbf Abstract.}{\smaller\
We review the recent generalized fractional calculus of variations.
We consider variational problems containing generalized fractional integrals
and derivatives and study them using indirect methods. In particular,
we provide necessary optimality conditions of Euler--Lagrange type
for the fundamental and isoperimetric problems, natural boundary conditions,
and Noether type theorems.}
\zyh


\no{\smallerbf Keywords:}{\smaller\
Calculus of variations; Fractional calculus;
Euler--Lagrange equations;
Natural boundary conditions;
Isoperimetric problems;
Noether's theorem.}
\zjq
\normalsize


\section{Introduction}

Fractional differentiation means ``differentiation of arbitrary order''.
Its origin goes back more than 300 years, when in 1695 L'Hopital asked Leibniz the
meaning of $\frac{d^{n}y}{dx^{n}}$ for $n=\frac{1}{2}$.
After that, many famous mathematicians, like J. Fourier,
N. H. Abel, J. Liouville, B. Riemann, among others, contributed to
the development of Fractional Calculus \cite{hilfer,Podlubny,book:Samko}.
The theory of derivatives and integrals of arbitrary order
took more or less finished form by the end of the XIX century,
being very rich: fractional differentiation may be introduced
in several different ways, e.g., fractional derivatives of
Riemann--Liouville, Gr\"{u}nwald--Letnikov, Caputo, Miller--Ross, \ldots\
During three centuries, the theory of fractional derivatives
developed as a pure theoretical field of mathematics, useful
only for mathematicians. In the last few decades, however,
fractional differentiation proved very useful in various fields:
physics (classic and quantum mechanics, thermodynamics, etc.),
chemistry, biology, economics, engineering, signal and image processing,
and control theory \cite{book:Baleanu,TM,livro:ortigueira}.
Let
$$
{_aI_t^1}x(t) := \int_a^t x(\tau) d\tau.
$$
It is easy to prove, by induction, that
\begin{equation*}
{_aI_t^n}x(t) = \frac{1}{(n-1)!}\int_a^t (t-\tau)^{n-1}x(\tau) d\tau
\end{equation*}
for all $n \in \mathbb{N}$: if it is true for the $n$-fold integral,
then
\begin{equation*}
\begin{split}
{_aI_t^{n+1}}x(t)
&= {_aI_t^1} \left(\frac{1}{(n-1)!}\int_a^t (t-\tau)^{n-1}x(\tau) d\tau\right)\\
&= \int_a^t \left(\frac{1}{(n-1)!}\int_a^\xi (\xi-\tau)^{n-1}x(\tau) d\tau\right)d\xi.
\end{split}
\end{equation*}
Interchanging the order of integration gives
$$
{_aI_t^{n+1}}x(t) = \frac{1}{n!}\int_a^t (t-\tau)^{n}x(\tau) d\tau.
$$
The (left Riemann--Liouville) fractional integral of $x(t)$ of order
$\alpha > 0$, is then defined with the help of Euler's Gamma function $\Gamma$:
\begin{equation}
\label{eq:FI}
{_aI_t^{\alpha}}x(t) := \frac{1}{\Gamma(\alpha)}\int_a^t (t-\tau)^{\alpha-1}x(\tau) d\tau .
\end{equation}
Let $\alpha > 0$ and denote the fractional integral of $f$ of order $\alpha$ by
$$
{_aD_x^{-\alpha}} f(x) = \frac{1}{\Gamma(\alpha)} \int_a^x f(t) (x - t)^{\alpha-1} dt.
$$
If $m$ is the smallest integer exceeding $\alpha$,
then we define the \emph{fractional Riemann--Liouville derivative of $f$} of order $\alpha$ as
\begin{equation}
\label{eq:FD:RL}
\begin{split}
{_aD_x^{\alpha}} f(x) &= \frac{d^m}{d x^m} \left[
{_aD_x^{-(m-\alpha)}} f(x) \right]\\
&=\frac{1}{\Gamma(m - \alpha)} \frac{d^m}{d x^m} \int_a^x f(t) (x - t)^{m-\alpha-1} dt.
\end{split}
\end{equation}
Another definition of fractional derivatives
was introduced by M. Caputo in 1967, interchanging
the order of the operators $ \frac{d^m}{d x^m}$ and ${_aD_x^{-(m-\alpha)}}$
in \eqref{eq:FD:RL}:
\begin{equation}
\label{eq:FD:C}
{_a^CD_x^\alpha} := {_aD_x^{-(m-\alpha)}} \circ \frac{d^m}{dx^m}.
\end{equation}
Here we consider generalizations of operators \eqref{eq:FI}, \eqref{eq:FD:RL}
and \eqref{eq:FD:C} by considering more general kernels (see Section~\ref{sec:GFO}).

The classical fundamental problem of the calculus of variations
is formulated as follows: minimize (or maximize) the functional
\begin{equation*}
\mathcal{J}(x)=\int_a^b L(t,x(t), x'(t)) \, dt
\end{equation*}
on $\mathcal{D}=\{x\in C^1([a,b]) : x(a)=x_a,\, x(b)=x_b\}$,
where $L:[a,b]\times\mathbb{R}^{2n}\rightarrow \mathbb{R}$ is
twice continuously differentiable.
In Mechanics, function $L$ is called the \emph{Lagrangian};
functional $\mathcal{J}$ is called the \emph{Action}.
If $x$ gives a (local) minimum (or maximum)
to $\mathcal{J}$ on $\mathcal{D}$, then
\begin{equation*}
\frac{d}{dt} \partial_3 L\left(t,x(t), x'(t)\right)
= \partial_2 L\left(t,x(t),x'(t)\right)
\end{equation*}
holds for all $t\in[a,b]$, where we are using the notation
$\partial_i F$ for the partial derivative of a function $F$
with respect to its $i$th argument. This is the celebrated
Euler--Lagrange equation, which is a first-order
necessary optimality condition.
In Mechanics, if Lagrangian $L$ does not depend explicitly on $t$,
then the \emph{energy}
\begin{equation}
\label{eq:energ}
\mathcal{E}(x) := -L(x,x') + \frac{\partial L}{\partial x'}(x,x') \cdot x'
\end{equation}
is constant along physical trajectories $x$
(that is, along the solutions of the Euler--Lagrange equations).
Consider a particle of mass $m$, and let $x:\mathbb{R}\rightarrow \mathbb{R}^3$
denote the trajectory of this particle. Define the Lagrangian
to be the difference between the kinetic and potential energies,
$$
L(t,x,x') := T(x)-V(x) = \frac{1}{2} m\|x'\|^2-V(x),
$$
and the action of the trajectory from time $a$ to $b$ to be the integral
$$
\mathcal{J}(x)=\int_a^b L(t,x(t), x'(t)) \, dt .
$$
Hamilton's Principle of Least Action
asserts that particles follow trajectories which minimize the action.
Therefore, the solutions of the Euler--Lagrange equations
give the physical trajectories. In this case
the Euler--Lagrange equations give Newton's second law:
\begin{equation*}
m\frac{d^2x^i}{dt^2}=-\frac{\partial V}{\partial x^i}.
\end{equation*}
Let us consider the usual discretization
of a function
$f: t\in [a,b]\subset \R \mapsto f(t) \in \R$:
denote by $h=(b-a)/N$ the step of discretization;
consider the partition $t_k = a+ k h$, $k=0,\ldots ,N$, of $[a,b]$;
let $\mathbf{F} =\{ f_k :=f(t_k)\}_{k=0,\dots ,N}$;
and substitute the differential operator
$\frac{d}{dt} $ by $\Delta_+$ or $\Delta_-$:
\begin{equation*}
\begin{split}
\Delta_+ (\mathbf{F} ) &=\left\{ \frac{f_{k+1} -f_k}{h}
,\, 0\leq k\leq N-1\, ,\ 0 \right\}, \\
\Delta_- (\mathbf{F} )&=\left \{ 0, \frac{f_k - f_{k-1}}{h} ,
\ 1\leq k \leq N \right \} .
\end{split}
\end{equation*}
The discrete version of the Euler--Lagrange equation obtained by
the direct embedding is
\begin{equation}
\label{eq:NS}
\frac{x_{k+2}-2 x_{k+1}+x_k}{h^2}m + \frac{\partial V}{\partial x}(x_k) = 0,
\quad k = 0, \ldots, N-2,
\end{equation}
where $N = \frac{b-a}{h}$ and $x_k = x(a+k h)$.
This numerical scheme is of order one:
we make an error of order $h$ at each step,
which is of course not good. We can do better
by considering the variational structure of the problem.
All Lagrangian systems possess a variational structure, \textrm{i.e.},
their solutions correspond to critical points of a functional
and this characterization does not depend on the system coordinates.
This induces strong constraints on solutions.
For the example we are considering, which is autonomous,
the \emph{conservation of energy} asserts that
$$
\mathcal{E}(x)=T(x)+V(x) = const.
$$
Using such conservation law we can easily improve the numerical scheme
\eqref{eq:NS} into a new one with an error of order $h^2$ at each step,
which is of course better. Unfortunately, in real systems friction corrupts conservation of energy,
and the usefulness of variational principles is lost: ``forces of a frictional
nature are outside the realm of variational principles''.
For conservative systems, variational methods are equivalent to the original method used by Newton.
However, while Newton's equations allow nonconservative forces, the
later techniques of Lagrangian and Hamiltonian mechanics have no
direct way to dealing with them. Let us recall the classical problem of linear friction:
\begin{equation}
\label{eq:L:fric}
m\frac{d^2x}{dt^2}+\gamma\frac{dx}{dt} -\frac{\partial V}{\partial x}=0, \quad \gamma>0.
\end{equation}
In 1931, Bauer proved that it is impossible to use a
variational principle to derive a single linear dissipative
equation of motion with constant coefficients like \eqref{eq:L:fric}.
Bauer's theorem expresses the well-known belief that there is no direct
method of applying variational principles to nonconservative
systems, which are characterized by friction or other dissipative
processes. Fractional derivatives provide an elegant solution to the problem.
Indeed, the proof of Bauer's theorem relies on the tacit
assumption that all derivatives are of integer order.
If a Lagrangian is constructed using fractional (noninteger order) derivatives,
then the resulting equation of motion can be nonconservative!
This was first proved by F. Riewe in 1996/97 \cite{CD:Riewe:1996,CD:Riewe:1997},
marking the beginning of the \emph{Fractional Calculus of Variations} (FCV).
Because most processes observed in the physical world are
nonconservative, FCV constitutes an important research area,
allowing to apply the power of variational methods to real systems.
The first book on the subject is \cite{book:AD}, which provides
a gentle introduction to the FCV. The model problem considered in \cite{book:AD}
is to find an admissible function giving a minimum value
to an integral functional that depends on an unknown function (or functions), of one
or several variables, and its fractional derivatives and/or
fractional integrals. Here we explain how the main results presented
in \cite{book:AD} can be extended by considering generalized fractional operators
\cite{Odz:PhD}.

The text is organized as follows. Section~\ref{sec:Prelim}
recalls the definitions of generalized fractional operators,
for functions of one (Section~\ref{sec:GFO})
and several variables (Section~\ref{sub:sec:PO}).
Main results are then given in Section~\ref{sec:MR}.
Section~\ref{sec:fund} considers the one-dimensional
fundamental problem of the calculus of variations
with generalized fractional operators,
providing an appropriate Euler--Lagrange equation.
Next, in Section~\ref{sec:nat}, we study variational problems with free
end points and, besides Euler--Lagrange equations,
we prove the so called natural boundary conditions (transversality conditions).
As particular cases, we obtain natural boundary conditions for problems with standard
fractional operators \eqref{eq:FI}--\eqref{eq:FD:C}.
Section~\ref{sec:iso} is devoted to generalized fractional
isoperimetric problems. We aim to find functions that minimize an integral functional
subject to given boundary conditions and isoperimetric constraints. We prove necessary
optimality conditions and, as corollaries, we obtain Euler--Lagrange equations
for isoperimetric problems with standard fractional operators \eqref{eq:FI}--\eqref{eq:FD:C}.
Furthermore, we illustrate our results through an example. In Section~\ref{sec:NTH:sing}
we prove a generalized fractional counterpart of Noether's theorem. Assuming invariance
of the functional, we prove that any extremal must satisfy a certain generalized
fractional equation. Finally, in Section~\ref{sec:SEV} we study multidimensional
fractional variational problems with generalized partial operators. We end with
Section~\ref{sec:conc} of conclusions.


\section{Preliminaries}
\label{sec:Prelim}

This section presents definitions of generalized fractional operators.
In special cases, these operators simplify to the classical Riemann--Liouville
fractional integrals \eqref{eq:FI},
and Riemann--Liouville \eqref{eq:FD:RL}
and Caputo fractional derivatives \eqref{eq:FD:C}.


\subsection{Generalized fractional operators}
\label{sec:GFO}

Let us define the following triangle:
\begin{equation*}
\Delta:=\left\{(t,\tau)\in\R^2:~a\leq \tau<t\leq b\right\}.
\end{equation*}

\begin{definition}
Let us consider a function $k:\Delta\rightarrow\R$.
For any function $f:(a,b)\rightarrow\R$, the generalized
fractional integral operator $K_P$ is defined
for almost all $t \in (a,b)$ by
\begin{equation*}
K_{P}[f](t) = \lambda \int_a^t k(t,\tau) f(\tau)d\tau
+ \mu \int_t^b k(\tau,t) f(\tau) d\tau
\end{equation*}
with $P=\langle a,t,b,\lambda,\mu \rangle$, $\lambda$, $\mu\in \R$.
\end{definition}

In particular, for suitably chosen kernels $k(t,\tau)$ and sets $P$,
kernel operators $\K$ reduce to the classical or variable order
fractional integrals of Riemann--Liouville type,
and classical fractional integrals of Hadamard type.

\example
\begin{enumerate}
\item Let $k^{\alpha}(t-\tau)
=\frac{1}{\Gamma(\alpha)}(t-\tau)^{\alpha-1}$ and $0<\alpha<1$.
If $P=\langle a,t,b,1,0\rangle$, then
\begin{equation*}
K_{P}[f](t)=\frac{1}{\Gamma(\alpha)}
\int\limits_a^t(t-\tau)^{\alpha-1}f(\tau)d\tau
=: {_{a}}\textsl{I}^{\alpha}_{t} [f](t)
\end{equation*}
gives the left Riemann--Liouville fractional integral
of order $\alpha$. Now let $P=\langle a,t,b,0,1\rangle$. Then,
\begin{equation*}
K_{P}[f](t)=\frac{1}{\Gamma(\alpha)}
\int\limits_t^b(\tau-t)^{\alpha-1}f(\tau)d\tau
=: {_{t}}\textsl{I}^{\alpha}_{b} [f](t)
\end{equation*}
is the right Riemann--Liouville fractional integral
of order $\alpha$.

\item For $k^{\alpha}(t,\tau)
=\frac{1}{\Gamma(\alpha(t,\tau))}(t-\tau)^{\alpha(t,\tau)-1}$
and $P=\langle a,t,b,1,0\rangle$,
\begin{equation*}
K_{P}[f](t)=
\int\limits_a^t\frac{1}{\Gamma(\alpha(t,\tau)}(t-\tau)^{\alpha(t,\tau)-1}f(\tau)d\tau
=: {_{a}}\textsl{I}^{\alpha(\cdot,\cdot)}_{t} [f](t)
\end{equation*}
is the left Riemann--Liouville fractional integral
of variable order $\alpha(\cdot,\cdot)$,
and for $P=\langle a,t,b,0,1\rangle$
\begin{equation*}
K_{P}[f](t)=
\int\limits_t^b\frac{1}{\Gamma(\alpha(\tau,t))}(\tau-t)^{\alpha(t,\tau)-1}f(\tau)d\tau
=: {_{t}}\textsl{I}^{\alpha(\cdot,\cdot)}_{b} [f](t)
\end{equation*}
is the right Riemann--Liouville fractional integral
of variable order $\alpha(\cdot,\cdot)$.

\item If $0<\alpha<1$, kernel $k^{\alpha}(t,\tau)
=\frac{1}{\Gamma(\alpha)}\left(\log\frac{t}{\tau}\right)^{\alpha-1}\frac{1}{\tau}$
and $P=\langle a,t,b,1,0\rangle$, then the general operator $K_{P}$
reduces to the left Hadamard fractional integral:
\begin{equation*}
K_{P}[f](t)=
\frac{1}{\Gamma(\alpha)}\int_a^t
\left(\log\frac{t}{\tau}\right)^{\alpha-1}\frac{f(\tau)d\tau}{\tau}
=: \Hl [f](t);
\end{equation*}
while for $P=\langle a,t,b,0,1\rangle$ operator $K_{P}$
reduces to the right Hadamard fractional integral:
\begin{equation*}
K_{P}[f](t)=
\frac{1}{\Gamma(\alpha)}\int_t^b
\left(\log\frac{\tau}{t}\right)^{\alpha-1}\frac{f(\tau)d\tau}{\tau}
=:\Hr [f](t).
\end{equation*}

\item Generalized fractional integrals can be also reduced
to other fractional operators, e.g.,
Riesz, Katugampola or Kilbas operators. Their definitions
can be found in \cite{Katugampola,Kilbas,book:Kilbas}.
\end{enumerate}
\dl

The generalized differential operators $A_P$
and $B_P$ are defined with the help of operator $K_P$.

\begin{definition}
The generalized fractional derivative
of Riemann--Liouville type, denoted by $A_P$,
is defined by
$$
A_P = \frac{d}{dt}\circ K_P.
$$
\end{definition}

The next differential operator is obtained by interchanging the order
of the operators in the composition that defines $A_P$.

\begin{definition}
The general kernel differential operator of Caputo type, denoted by $B_P$,
is given by
$$
B_P =K_P \circ\frac{d}{dt}.
$$
\end{definition}

\example
The standard Riemann--Liouville \eqref{eq:FD:RL}
and Caputo \eqref{eq:FD:C} fractional derivatives
(see, \textrm{e.g.}, \cite{book:Kilbas,book:Klimek}) are easily obtained
from the general kernel operators $A_P $ and $B_P$, respectively.
Let $k^{\alpha}(t-\tau)=\frac{1}{\Gamma(1-\alpha)}(t-\tau)^{-\alpha}$,
$\alpha \in (0,1)$. If $P=\langle a,t,b,1,0\rangle$, then
\begin{equation*}
A_{P} [f](t)=\frac{1}{\Gamma(1-\alpha)}
\frac{d}{dt} \int\limits_a^t(t-\tau)^{-\alpha}f(\tau)d\tau
=: {_{a}}\textsl{D}^{\alpha}_{t} [f](t)
\end{equation*}
is the standard left Riemann--Liouville fractional derivative
of order $\alpha$, while
\begin{equation*}
B_{P} [f](t)=\frac{1}{\Gamma(1-\alpha)}
\int\limits_a^t(t-\tau)^{-\alpha} f'(\tau)d\tau
=: {^{C}_{a}}\textsl{D}^{\alpha}_{t} [f](t)
\end{equation*}
is the standard left Caputo fractional derivative of order $\alpha$;
if $P=\langle a,t,b,0,1\rangle$, then
\begin{equation*}
- A_{P} [f](t)
=- \frac{1}{\Gamma(1-\alpha)} \frac{d}{dt}
\int\limits_t^b(\tau-t)^{-\alpha}f(\tau)d\tau
=: {_{t}}\textsl{D}^{\alpha}_{b} [f](t)
\end{equation*}
is the standard right Riemann--Liouville
fractional derivative of order $\alpha$, while
\begin{equation*}
- B_{P} [f](t) = - \frac{1}{\Gamma(1-\alpha)}
\int\limits_t^b(\tau-t)^{-\alpha} f'(\tau)d\tau
=: {^{C}_{t}}\textsl{D}^{\alpha}_{b} [f](t)
\end{equation*}
is the standard right Caputo fractional derivative of order $\alpha$.
\dl


\subsection{Generalized partial fractional operators}
\label{sub:sec:PO}

In this section, we introduce notions of generalized partial fractional integrals
and derivatives, in a multidimensional finite domain. They are natural generalizations
of the corresponding fractional operators in the single variable case. Furthermore,
similarly as in the integer order case, computation of partial fractional derivatives
and integrals is reduced to the computation of one-variable fractional operators.
Along the work, for $i=1,\dots,n$, let $a_i,b_i$ and $\alpha_i$ be numbers in
$\R$ and $t=(t_1,\dots,t_n)$ be such that $t\in \Omega_n$, where
$\Omega_n=(a_1,b_1)\times\dots\times(a_n,b_n)$ is a subset of $\R^n$.
Moreover, let us define the following sets:
\begin{equation*}
\Delta_i:=\left\{(t_i,\tau)\in\R^2:~a_i
\leq \tau <t_i\leq b_i\right\},~i=1\dots,n.
\end{equation*}
Let us assume that $\lambda=(\lambda_1,\dots,\lambda_n)$ and
$\mu=(\mu_1,\dots,\mu_n)$ are in $\mathbb{R}^n$. We shall present
definitions of generalized partial fractional integrals and derivatives.
Let $k_i:\Delta_i\rightarrow\R$, $i=1\dots,n$, and $t\in\Omega_n$.

\begin{definition}
For any function $f:\Omega_n\rightarrow\R$, the generalized
partial integral $K_{P_i}$ is defined, for almost
all $t_i \in (a_i,b_i)$, by
\begin{multline*}
K_{P_{i}}[f](t):=\lambda_i\int\limits_{a_i}^{t_i}
k_i(t_i,\tau)f(t_1,\dots,t_{i-1},\tau,t_{i+1},\dots,t_n)d\tau \\
+\mu_i\int\limits_{t_i}^{b_i}k_i(\tau,t_i)f(t_1,\dots,t_{i-1},\tau,t_{i+1},
\dots,t_n)d\tau,
\end{multline*}
where $P_{i}=\langle a_i,t_i,b_i,\lambda_i,\mu_i \rangle $.
\end{definition}

\begin{definition}
The generalized partial fractional derivative of Riemann--Liouville
type with respect to the $i$th variable $t_i$ is given by
\begin{equation*}
A_{P_{i}}:=\frac{\partial}{\partial t_i}\circ K_{P_{i}}.
\end{equation*}
\end{definition}

\begin{definition}
The generalized partial fractional derivative of Caputo type
with respect to the $i$th variable $t_i$ is given by
\begin{equation*}
B_{P_{i}}:=K_{P_{i}}\circ\frac{\partial}{\partial t_i}.
\end{equation*}
\end{definition}

\example
Similarly as in the one-dimensional case, partial operators $K$, $A$ and $B$
reduce to the standard partial fractional integrals and derivatives.
The left- or right-sided Riemann--Liouville partial fractional integral
with respect to the $i$th variable $t_i$ is obtained by choosing the kernel
$k_i^{\alpha}(t_i,\tau)=\frac{1}{\Gamma(\alpha_i)}(t_i-\tau)^{\alpha_i-1}$:
\begin{equation*}
K_{P_{i}}[f](t)=\frac{1}{\Gamma(\alpha_i)}\int\limits_{a_i}^{t_i}(t_i-\tau)^{\alpha_i-1}
f(t_1,\dots,t_{i-1},\tau,t_{i+1},\dots,t_n)d\tau \\
=: {_{a_i}}\textsl{I}^{\alpha_i}_{t_i} [f](t)
\end{equation*}
for $P_{i}=\langle a_i,t_i,b_i,1,0\rangle$, and
\begin{equation*}
K_{P_{i}}[f](t)=\frac{1}{\Gamma(\alpha_i)}\int\limits_{t_i}^{b_i}
(\tau-t_i)^{\alpha_i-1}f(t_1,\dots,t_{i-1},\tau,t_{i+1},\dots,t_n)d\tau\\
=: {_{t_i}}\textsl{I}^{\alpha_i}_{b_i} [f](t)
\end{equation*}
for $P_{i}=\langle a_i,t_i,b_i,0,1\rangle$. The standard left- and
right-sided Riemann--Liouville and Caputo partial fractional derivatives
with respect to $i$th variable $t_i$ are obtained by choosing
$k_i^{\alpha}(t_i,\tau)=\frac{1}{\Gamma(1-\alpha_i)}(t_i-\tau)^{-\alpha_i}$.
If $P_{i}=\langle a_i,t_i,b_i,1,0\rangle$, then
\begin{equation*}
\begin{split}
A_{P_{i}}[f](t)&=\frac{1}{\Gamma(1-\alpha_i)}\frac{\partial}{\partial t_i}
\int\limits_{a_i}^{t_i}(t_i-\tau)^{-\alpha_i}f(t_1,\dots,t_{i-1},\tau,t_{i+1},
\dots,t_n)d\tau\\
&=:{_{a_i}}\textsl{D}^{\alpha_i}_{t_i} [f](t),
\end{split}
\end{equation*}
\begin{equation*}
\begin{split}
B_{P_{i}}[f](t)&=\frac{1}{\Gamma(1-\alpha_i)}\int\limits_{a_i}^{t_i}
(t_i-\tau)^{-\alpha_i}\frac{\partial}{\partial \tau}
f(t_1,\dots,t_{i-1},\tau,t_{i+1},\dots,t_n)d\tau\\
&=:{^{C}_{a_i}}\textsl{D}^{\alpha_i}_{t_i} [f](t).
\end{split}
\end{equation*}
If $P_{i}=\langle a_i,t_i,b_i,0,1\rangle$, then
\begin{equation*}
\begin{split}
-A_{P_{i}}[f](t)&=\frac{-1}{\Gamma(1-\alpha_i)}\frac{\partial}{\partial t_i}
\int\limits_{t_i}^{b_i}(\tau-t_i)^{-\alpha_i}f(t_1,
\dots,t_{i-1},\tau,t_{i+1},\dots,t_n)d\tau\\
&=:{_{t_i}}\textsl{D}^{\alpha_i}_{b_i} [f](t),
\end{split}
\end{equation*}
\begin{equation*}
\begin{split}
-B_{P_{i}}[f](t)&=\frac{-1}{\Gamma(1-\alpha_i)}\int\limits_{t_i}^{b_i}
(\tau-t_i)^{-\alpha_i}\frac{\partial}{\partial \tau}
f(t_1,\dots,t_{i-1},\tau,t_{i+1},\dots,t_n)d\tau \\
&=:{^{C}_{t_i}}\textsl{D}^{\alpha_i}_{b_i} [f](t).
\end{split}
\end{equation*}
Moreover, one can easily check that variable order partial fractional
integrals and derivatives are also particular cases of operators $\PKi$, $\PAi$
and $\PBi$. Definitions of variable order partial fractional
operators can be found in \cite{Ja}.
\dl


\section{The general fractional calculus of variations}
\label{sec:MR}

Fractional Variational Calculus (FVC) was first introduced in 1996,
by Fred Riewe \cite{CD:Riewe:1996},
and the reason is well explained by Lanczos, who wrote:
``Forces of a frictional nature are outside the realm of variational principles''.
The idea of FVC is to unify the calculus of variations and the fractional calculus
by inserting fractional derivatives (and/or integrals) into the variational functionals.
With less than twenty years of existence, FVC was developed through several different
approaches --- results include problems depending on Caputo fractional derivatives,
Riemann--Liouville fractional derivatives, Riesz fractional derivatives,
Hadamard fractional derivatives, and variable order fractional derivatives \cite{book:AD}.
The Generalized Fractional Calculus of Variations (GFCV)
concerns operators depending on general kernels, unifying different perspectives to the FVC.
As particular cases, such operators reduce to the standard
fractional integrals and derivatives (see, e.g.,
\cite{OmPrakashAgrawal,Lupa,MyID:226,FVC_Gen_Int,FVC_Sev,GreenThm,NoetherGen}).
Before presenting the GFCV, we define the concept of minimizer.
Let $(X,\left\|\cdot\right\|)$ be a normed linear space and $\mathcal{I}$ be a functional defined
on a nonempty subset $\mathcal{A}$ of $X$. Moreover, let us introduce the following set:
if $\y\in\mathcal{A}$ and $\delta>0$, then
\begin{equation*}
\mathcal{N}_{\delta}(\y):=\left\{y\in\mathcal{A}:\left\|y-\y\right\|<\delta\right\}
\end{equation*}
is called a neighborhood of $\y$ in $\mathcal{A}$.

\begin{definition}
Function $\y\in\mathcal{A}$ is called a minimizer of $\mathcal{I}$ if there exists
a neighborhood $\mathcal{N}_{\delta}(\y)$ of $\y$ such that
$\mathcal{I}(\y)\leq\mathcal{I}(y)$ for all $y\in \mathcal{N}_{\delta}(\y)$.
\end{definition}

Note that any function $y\in\mathcal{N}_{\delta}(\y)$ can be represented,
in a convenient way, as a perturbation of $\y$. Precisely,
\begin{equation*}
\forall y\in\mathcal{N}_{\delta}(\y),~~\exists\eta
\in\mathcal{A}_0,~~ y=\y+h\eta,~~\left|h\right|\leq\varepsilon,
\end{equation*}
where $0<\varepsilon<\frac{\delta}{\left\|\eta\right\|}$ and $\mathcal{A}_0$
is a suitable set of functions $\eta$ such that
\begin{equation*}
\mathcal{A}_0=\left\{\eta\in X:\y+h\eta\in\mathcal{A},
~~\left|h\right|\leq\varepsilon\right\}.
\end{equation*}


\subsection{Fundamental problem}
\label{sec:fund}

For $P=\langle a,t,b,\lambda,\mu\rangle$,
let us consider the following functional:
\begin{equation}
\label{eq:F:1}
\fonction{\mathcal{I}}{\mathcal{A}(y_a,y_b)}{\R}{y}{
\displaystyle\int\limits_a^b
F(y(t),K_P[y](t),\dot{y}(t),B_P[y](t),t) \; dt ,}
\end{equation}
where
\begin{multline*}
\mathcal{A}(y_a,y_b):=\left\{y\in C^1([a,b];\R):\;
y(a)=y_a,~y(b)=y_b,\; \right.\\
\left. \textnormal{and}\;
K_P[y],B_P[y]\in C([a,b];\R)\right\},
\end{multline*}
$\dot{y}$ denotes the classical derivative of $y$,
$K_P$ is the generalized fractional integral operator
with kernel belonging to $L^q(\Delta;\R)$,
$B_P=K_P\circ\frac{d}{dt}$, and $F$ is the Lagrangian function
\begin{equation*}
\fonction{F}{\R^4 \times [a,b]}{\R}{(x_1,x_2,x_3,x_4,t)}{F(x_1,x_2,x_3,x_4,t)}
\end{equation*}
of class $C^1$. Moreover, we assume that
\begin{itemize}
\item $K_{P^*}\left[\tau\mapsto\partial_2
F(y(\tau),K_P[y](\tau),\dot{y}(\tau),B_P[y](\tau),\tau)\right]\in C([a,b];\R)$,
\item $t\mapsto\partial_3 F (y(t),K_P[y](t),\dot{y}(t),B_P[y](t),t)\in C^1([a,b];\R)$,
\item $K_{P^*}\left[\tau\mapsto\partial_4
F(y(\tau),K_P[y](\tau),\dot{y}(\tau),B_P[y](\tau),\tau)\right]\in C^1([a,b];\R)$,
\end{itemize}
where $P^*$ is the dual set of $P$, that is, $P^*=\langle a,t,b,\mu,\lambda\rangle$.

The next result gives a necessary optimality condition of Euler--Lagrange type
for the problem of finding a function minimizing functional \eqref{eq:F:1}.

\begin{thm}
{\rm (cf. \cite{MyID:226})}
Let $\bar{y}\in\mathcal{A}(y_a,y_b)$ be a minimizer of functional \eqref{eq:F:1}.
Then, $\bar{y}$ satisfies the following Euler--Lagrange equation:
\begin{multline}
\label{eq:EL:OCM}
\frac{d}{dt}\left[\partial_3 F\left(\star_{\bar{y}}\right)(t)\right]
+A_{P^*}\left[\tau\mapsto\partial_4 F\left(\star_{\bar{y}}\right)(\tau)\right](t)\\
=\partial_1 F\left(\star_{\bar{y}}\right)(t)
+K_{P^*}\left[\tau\mapsto\partial_2 F\left(\star_{\bar{y}}\right)(\tau)\right](t),
\end{multline}
where $\left(\star_{\bar{y}}\right)(t)
=\left(\y(t),K_P[\y](t),\dot{\y}(t),B_P[\y](t),t\right)$, $t\in (a,b)$.
\end{thm}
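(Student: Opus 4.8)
The plan is to use the standard variational (indirect) method adapted to the generalized fractional setting. First I would fix an arbitrary admissible variation $\eta \in \mathcal{A}_0(0,0)$, i.e. $\eta \in C^1([a,b];\R)$ with $\eta(a)=\eta(b)=0$ and $K_P[\eta], B_P[\eta]\in C([a,b];\R)$, and consider the real-valued function
\begin{equation*}
j(h) := \mathcal{I}(\y + h\eta) = \int_a^b F\bigl(\y+h\eta,\,K_P[\y+h\eta],\,\dot{\y}+h\dot{\eta},\,B_P[\y+h\eta],\,t\bigr)\,dt,
\end{equation*}
noting that $K_P$ and $B_P$ are linear, so $K_P[\y+h\eta]=K_P[\y]+hK_P[\eta]$ and likewise for $B_P$. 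Since $\y$ is a minimizer and $h=0$ lies in the interior of the admissible range of $h$, Fermat's theorem gives $j'(0)=0$. Differentiating under the integral sign (justified by the $C^1$ hypotheses on $F$ and the continuity assumptions on the relevant compositions) yields
\begin{equation*}
0 = \int_a^b \Bigl[\partial_1 F(\star_{\y})\,\eta + \partial_2 F(\star_{\y})\,K_P[\eta] + \partial_3 F(\star_{\y})\,\dot{\eta} + \partial_4 F(\star_{\y})\,B_P[\eta]\Bigr]\,dt.
\end{equation*}

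The key step is then to rewrite the second and fourth terms so that $\eta$ (rather than $K_P[\eta]$ or $B_P[\eta]$) appears as a common factor. For the $K_P[\eta]$ term I would invoke the generalized fractional integration-by-parts formula (the adjointness of $K_P$ and $K_{P^*}$ established in the cited preliminary work), which moves $K_P$ off $\eta$ and onto $\partial_2 F(\star_{\y})$ at the cost of replacing $P$ by its dual $P^*$; this is exactly why the continuity hypothesis $K_{P^*}[\tau\mapsto\partial_2 F(\star_{\y})(\tau)]\in C([a,b];\R)$ is imposed. For the $B_P[\eta] = K_P[\dot{\eta}]$ term, I would first apply the same adjointness to transfer $K_P$ onto $\partial_4 F(\star_{\y})$, producing an integral of $\dot{\eta}$ against $K_{P^*}[\tau\mapsto\partial_4 F(\star_{\y})(\tau)]$, and then integrate by parts in the classical sense; the boundary term vanishes because $\eta(a)=\eta(b)=0$, and the derivative that falls on $K_{P^*}[\partial_4 F(\star_{\y})]$ is precisely $A_{P^*}[\tau\mapsto\partial_4 F(\star_{\y})(\tau)]$ by definition of $A_{P^*}=\frac{d}{dt}\circ K_{P^*}$ (here the $C^1$ hypothesis on $K_{P^*}[\partial_4 F]$ is needed). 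Similarly the $\dot{\eta}$ term $\int_a^b \partial_3 F(\star_{\y})\dot{\eta}\,dt$ is handled by ordinary integration by parts, using $\eta(a)=\eta(b)=0$ and the $C^1$ hypothesis on $t\mapsto\partial_3 F(\star_{\y})(t)$.

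Collecting everything, I obtain
\begin{equation*}
0 = \int_a^b \Bigl[\partial_1 F(\star_{\y}) + K_{P^*}[\partial_2 F(\star_{\y})] - \tfrac{d}{dt}\partial_3 F(\star_{\y}) - A_{P^*}[\partial_4 F(\star_{\y})]\Bigr]\eta(t)\,dt
\end{equation*}
for every such $\eta$. The final step is to apply the fundamental lemma of the calculus of variations (du Bois-Reymond): since the bracketed expression is continuous on $[a,b]$ by our standing assumptions and annihilates every $\eta\in C^1$ vanishing at the endpoints, it must vanish identically on $(a,b)$, which is exactly \eqref{eq:EL:OCM}. I expect the main obstacle to be the careful justification of the two technical points glossed above: (i) differentiation under the integral sign and the interchange of limit and integral in computing $j'(0)$, which requires dominated-convergence-type control uniform in $h$ near $0$ and uses the $L^q$ kernel assumption together with the continuity of the compositions; and (ii) verifying that the generalized integration-by-parts formula for $K_P$ genuinely applies to the functions at hand (in particular that $\dot\eta$ and $\partial_4 F(\star_{\y})$ lie in the function classes for which that formula was proved). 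Both are routine given the hypotheses but are where the real work of the proof resides.
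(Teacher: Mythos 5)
Your proposal is correct and follows essentially the same route as the source this theorem is quoted from (the paper itself states the result with a ``cf.''\ reference and omits the proof): first variation of $h\mapsto\mathcal{I}(\y+h\eta)$ using linearity of $K_P$ and $B_P$, transfer of $K_P$ onto $\partial_2 F$ and $\partial_4 F$ via the duality $\int_a^b f\,K_P[g]\,dt=\int_a^b g\,K_{P^*}[f]\,dt$ (a Fubini argument, which is where the $L^q$ kernel hypothesis enters), classical integration by parts on the $\dot{\eta}$ terms with vanishing boundary contributions, and the fundamental lemma. The signs and the appearance of $A_{P^*}=\frac{d}{dt}\circ K_{P^*}$ all check out against \eqref{eq:EL:OCM}, and you correctly identified the two technical points (differentiation under the integral sign and applicability of the generalized integration-by-parts formula) as the only places requiring care.
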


From now on, in order to simplify the notation,
for $T$ and $S$ being fractional operators, we write shortly
$$
T\left[\partial_i F(y(\tau),T[y](\tau),\dot{y}(\tau),S[y](\tau),\tau)\right]
$$
instead of
$$
T\left[\tau\mapsto\partial_i
F(y(\tau),T[y](\tau),\dot{y}(\tau),S[y](\tau),\tau)\right],
$$
$i=1,\dots,5$.

\example
{\rm (cf. \cite{MyID:226})}
Let $P=\langle 0,t,1,1,0 \rangle$.
Consider minimizing
\begin{equation*}
\mathcal{I}(y)=\int\limits_0^1 \left(K_P[y](t)+t\right)^2\; dt
\end{equation*}
subject to given boundary conditions
\begin{equation*}
y(0)=-1~~\textnormal{and}~~y(1)=-1-\int\limits_0^1u(1-\tau)\;d\tau,
\end{equation*}
where the kernel $k$ of $K_P$ is such that $k(t,\tau)=h(t-\tau)$ with
$h\in C^1([0,1];\R)$ and $h(0)=1$. Here the resolvent $u$ is related
to the kernel $h$ by
$$
u(t)=\mathcal{L}^{-1}\left[\frac{1}{s\tilde{h}(s)}-1\right](t),
\quad \tilde{h}(s)=\mathcal{L}[h](s),
$$
where $\mathcal{L}$ and $\mathcal{L}^{-1}$
are the direct and the inverse Laplace operators, respectively. We apply
Theorem~3.2 with Lagrangian $F$ given by
$$
F(x_1,x_2,x_3,x_4,t)=(x_2+t)^2.
$$
Because
\begin{equation*}
y(t)=-1-\int\limits_0^t u(t-\tau)\;d\tau
\end{equation*}
is the solution to the Volterra integral equation
\begin{equation*}
K_P[y](t)+t=0
\end{equation*}
of the first kind (see, e.g., Eq.~16, p.~114 of \cite{book:Polyanin}),
it satisfies our generalized Euler--Lagrange equation \eqref{eq:EL:OCM}, that is,
\begin{equation*}
K_{P^*}\left[K_P[y](\tau)+\tau\right](t)=0,
\quad t\in (a,b).
\end{equation*}
In particular, for kernel $h(t-\tau)=e^{-(t-\tau)}$ and boundary conditions
$y(0)=-1$ and $y(1)=-2$, the solution is $y(t)=-1-t$.
\dl

\begin{cor}
Let $0<\alpha<\frac{1}{q}$ and let $\bar{y}\in C^1([a,b];\R)$
be a solution to the problem of minimizing the functional
\begin{equation*}
\mathcal{I}(y)=\int\limits_a^b\;
F\left(y(t),\Ilc [y](t),\dot{y}(t),\Dcl [y](t),t\right)\; dt
\end{equation*}
subject to the boundary conditions $y(a)=y_a$ and $y(b)=y_b$, where
\begin{itemize}
\item $F\in C^1(\R^4\times [a,b];\R)$,

\item functions $t\mapsto\partial_1 F\left(y(t),\Ilc [y](t),\dot{y}(t),\Dcl [y](t),t\right)$ and
$$
\Irc\left[\partial_2 F(y(\tau),\Ilct [y](\tau),\dot{y}(\tau),\Dclt [y](\tau),\tau)\right]
$$
are continuous on $[a,b]$,

\item functions $t\mapsto\partial_3 F\left(y(t),\Ilc [y](t),\dot{y}(t),\Dcl [y](t),t\right)$ and
$$
\Irc\left[\partial_4 F(y(\tau),\Ilct [y](\tau),\dot{y}(\tau),\Dclt [y](\tau),\tau)\right]
$$
are continuously differentiable on $[a,b]$.
\end{itemize}
Then, the Euler--Lagrange equation
\begin{multline*}
\frac{d}{dt}\left(\partial_3 F(\bar{y}(t),\Ilc [\bar{y}](t),
\dot{\bar{y}}(t),\Dcl [\bar{y}](t),t)\right)\\
-\Dr\left[\partial_4 F(\bar{y}(\tau),\Ilct [\bar{y}](\tau),
\dot{\bar{y}}(\tau),\Dclt [\bar{y}](\tau),\tau)\right](t)\\
=\partial_1 F(\bar{y}(t),\Ilc [\bar{y}](t),
\dot{\bar{y}}(t),\Dcl [\bar{y}](t),t)\\
+\Ir\left[\partial_2 F(\bar{y}(\tau),\Ilct [\bar{y}](\tau),
\dot{\bar{y}}(\tau),\Dclt [\bar{y}](\tau),\tau)\right](t)
\end{multline*}
holds for $t\in (a,b)$.
\end{cor}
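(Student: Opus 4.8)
The plan is to obtain the Corollary as the particular case of Theorem~3.2 corresponding to left Riemann--Liouville integration and left Caputo differentiation. Concretely, I would fix the kernel $k^{1-\alpha}(t,\tau)=\frac{1}{\Gamma(1-\alpha)}(t-\tau)^{-\alpha}$, $\alpha\in(0,1)$, and the set $P=\langle a,t,b,1,0\rangle$. With this choice, by the Examples following the definitions of $K_P$ and of $B_P$, one has $K_P=\Ilc$ and hence $B_P=K_P\circ\frac{d}{dt}=\Dcl$; consequently the functional and the Lagrangian of the Corollary are exactly of the form \eqref{eq:F:1}, with the abstract operators $K_P$ and $B_P$ instantiated as $\Ilc$ and $\Dcl$.

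First I would verify that the standing hypotheses of Theorem~3.2 hold under the assumptions of the Corollary. Theorem~3.2 requires the kernel to belong to $L^q(\Delta;\R)$; since $\int_a^b\int_a^t(t-\tau)^{-\alpha q}\,d\tau\,dt<\infty$ if and only if $\alpha q<1$, the hypothesis $0<\alpha<\frac{1}{q}$ is precisely what guarantees $k^{1-\alpha}\in L^q(\Delta;\R)$. Next, the dual set is $P^*=\langle a,t,b,\mu,\lambda\rangle=\langle a,t,b,0,1\rangle$, so by the same Examples $K_{P^*}=\Irc$, the right Riemann--Liouville fractional integral of order $1-\alpha$, and $-A_{P^*}=\Dr$, the right Riemann--Liouville fractional derivative of order $\alpha$. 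The three itemized regularity assumptions preceding Theorem~3.2 --- on $\partial_3 F$ along the candidate, on $K_{P^*}$ applied to $\partial_2 F$ along the candidate, and on $K_{P^*}$ applied to $\partial_4 F$ along the candidate --- then become, after this identification, exactly the three bulleted hypotheses of the Corollary (the continuity of $t\mapsto\partial_1 F(\cdots)$ being automatic from $F\in C^1$ and from $\bar y,\Ilc[\bar y],\dot{\bar y},\Dcl[\bar y]\in C([a,b];\R)$).

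It then remains only to rewrite the conclusion. Substituting $K_P=\Ilc$, $B_P=\Dcl$, $K_{P^*}=\Irc$ and $A_{P^*}=-\Dr$ into the generalized Euler--Lagrange equation \eqref{eq:EL:OCM}, and spelling out $\left(\star_{\bar{y}}\right)(t)=\left(\bar y(t),\Ilc[\bar y](t),\dot{\bar y}(t),\Dcl[\bar y](t),t\right)$ --- with the integration variable written $\tau$ inside the operators $\Irc$ and $\Dr$, which produces the $\Ilct$ and $\Dclt$ appearing in the statement --- yields the displayed Euler--Lagrange equation of the Corollary; the minus sign in front of the $\Dr$ term is exactly the sign in the relation $A_{P^*}=-\Dr$. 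The only step that is not pure bookkeeping is the integrability computation showing $k^{1-\alpha}\in L^q(\Delta;\R)\iff 0<\alpha<\frac{1}{q}$, together with the careful matching of the abstract hypotheses of Theorem~3.2 with the concrete ones of the Corollary; everything else is substitution of kernels and tracking of the dual set $P^*$ and of signs.
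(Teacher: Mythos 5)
Your proposal is correct and is exactly the intended derivation: the paper offers no separate proof, the corollary being the specialization of Theorem~3.2 to the kernel $k^{1-\alpha}(t,\tau)=\frac{1}{\Gamma(1-\alpha)}(t-\tau)^{-\alpha}$ with $P=\langle a,t,b,1,0\rangle$, so that $K_P=\Ilc$, $B_P=\Dcl$, $K_{P^*}=\Irc$, $A_{P^*}=-\Dr$, and $0<\alpha<\frac{1}{q}$ ensures $k^{1-\alpha}\in L^q(\Delta;\R)$. Note only that your substitution correctly produces $\Irc$ in the last term of the Euler--Lagrange equation, whereas the printed statement shows $\Ir$; this appears to be a typo in the paper, since its own hypotheses are phrased with $\Irc$.
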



\subsection{Free initial boundary}
\label{sec:nat}

Let us define the set
\begin{multline*}
\mathcal{A}(y_b):=\left\{y\in C^1 ([a,b];\R):\; y(a)\;\textnormal{is free},
~y(b)=y_b,\; \right.\\
\left. \textnormal{and}\; K_P[y],B_P[y]\in C([a,b];\R)\right\},
\end{multline*}
and let $\y$ be a minimizer of functional
\eqref{eq:F:1} on $\mathcal{A}(y_b)$, i.e., $\y$ minimizes
\begin{equation}
\label{eq:F:2}
\fonction{\mathcal{I}}{\mathcal{A}(y_b)}{\R}{y}{\displaystyle\int\limits_a^b
F(y(t),K_P[y](t),\dot{y}(t),B_P[y](t),t) \; dt .}
\end{equation}

The next result shows that if in the generalized fractional variational problem
one initial boundary is not preassigned,
then a certain natural boundary condition can be determined.
\begin{thm}
{\rm (cf. \cite{MyID:226})}
If $\y\in\mathcal{A}(y_b)$ is a solution to the problem of minimizing functional
\eqref{eq:F:2} on the set $\mathcal{A}(y_b)$, then $\y$ satisfies
the Euler--Lagrange equation \eqref{eq:EL:OCM}. Moreover, the extra boundary condition
\begin{equation*}
\left.\partial_3 F(\star_{\bar{y}})(t)\right|_a
+\left.K_{P^*}\left[\partial_4 F(\star_{\y})(\tau)\right](t)\right|_a=0
\end{equation*}
holds with $(\star_{\bar{y}})(t)=(\bar{y}(t),K_P[\bar{y}](t),\dot{\bar{y}}(t),B_P[\bar{y}](t),t)$.
\end{thm}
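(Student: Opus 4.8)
The plan is to run the classical first-variation argument, substituting the generalized fractional integration-by-parts rule for the ordinary one. Since $y(a)$ is free while $y(b)=y_b$ is prescribed, an admissible variation $\eta\in\mathcal{A}_0$ is any $\eta\in C^1([a,b];\R)$ with $\eta(b)=0$, $\eta(a)$ arbitrary, and $K_P[\eta],B_P[\eta]\in C([a,b];\R)$. For such $\eta$ and $\y$ a minimizer of \eqref{eq:F:2} on $\mathcal{A}(y_b)$, put $j(h):=\mathcal{I}(\y+h\eta)$; then $h=0$ minimizes $j$, so $j'(0)=0$. Differentiating under the integral sign — licit because $F\in C^1$ and the compositions listed among the standing hypotheses on \eqref{eq:F:1} are continuous — and using linearity of $K_P$ and of $B_P=K_P\circ\frac{d}{dt}$, this gives
\begin{multline*}
0=\int_a^b\Bigl[\partial_1 F(\star_\y)(t)\,\eta(t)+\partial_2 F(\star_\y)(t)\,K_P[\eta](t)\\
+\partial_3 F(\star_\y)(t)\,\dot\eta(t)+\partial_4 F(\star_\y)(t)\,B_P[\eta](t)\Bigr]\,dt.
\end{multline*}

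Next I would rewrite this so that $\eta$ and the single number $\eta(a)$ are the only free data. The key device is the identity $\int_a^b g(t)\,K_P[f](t)\,dt=\int_a^b f(t)\,K_{P^*}[g](t)\,dt$, which follows from Fubini's theorem on the triangle $\Delta$ (valid since $k\in L^q(\Delta;\R)$ and since $K_{P^*}[\partial_2 F(\star_\y)]$ and $K_{P^*}[\partial_4 F(\star_\y)]$ are assumed continuous, resp.\ $C^1$). Applying it to the $K_P[\eta]$ term and, after writing $B_P[\eta]=K_P[\dot\eta]$, to the $\partial_4 F$ term, then integrating the two resulting $\dot\eta$-contributions by parts in the ordinary sense — using $A_{P^*}=\frac{d}{dt}\circ K_{P^*}$ and $\eta(b)=0$ — I arrive at
\begin{multline*}
0=\int_a^b\eta(t)\Bigl[\partial_1 F(\star_\y)(t)+K_{P^*}[\partial_2 F(\star_\y)](t)\\
-\frac{d}{dt}\partial_3 F(\star_\y)(t)-A_{P^*}[\partial_4 F(\star_\y)](t)\Bigr]\,dt\\
-\eta(a)\Bigl[\partial_3 F(\star_\y)(a)+K_{P^*}[\partial_4 F(\star_\y)](a)\Bigr].
\end{multline*}

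The conclusion then splits into two stages. Restricting first to variations with $\eta(a)=0$ kills the boundary term; since the bracketed integrand is continuous and $\eta$ is otherwise arbitrary with $\eta(a)=\eta(b)=0$, the fundamental lemma of the calculus of variations forces that bracket to vanish on $(a,b)$, which rearranges to the Euler--Lagrange equation \eqref{eq:EL:OCM}. Returning then to arbitrary $\eta\in\mathcal{A}_0$, the integral term drops out by the equation just established, leaving $\eta(a)\bigl[\partial_3 F(\star_\y)(a)+K_{P^*}[\partial_4 F(\star_\y)](a)\bigr]=0$; choosing a variation with $\eta(a)\neq0$ yields the asserted natural boundary condition.

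The one genuine obstacle is bookkeeping rather than ideas: one must check that the three regularity hypotheses on $F$ and on the operator-valued compositions, together with $k\in L^q(\Delta;\R)$, are precisely what legitimizes (i) differentiating $j$ under the integral, (ii) the Fubini interchange that produces the $K_{P^*}$ integration-by-parts identity, and (iii) the classical integration by parts that yields the $A_{P^*}$-terms and the endpoint contributions at $a$; and one must confirm that $\mathcal{A}_0$ is rich enough to contain both sufficiently many variations vanishing at $a$ (for the fundamental lemma) and at least one with $\eta(a)\neq0$ (for the boundary term), which is where the requirement $K_P[\eta],B_P[\eta]\in C([a,b];\R)$ is used.
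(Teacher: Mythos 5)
Your argument is correct and is essentially the proof of the cited source \cite{MyID:226}: the paper itself states this theorem without proof, and the standard route is exactly the one you follow — first variation, the Fubini-based duality $\int_a^b g\,K_P[f]\,dt=\int_a^b f\,K_{P^*}[g]\,dt$, classical integration by parts with $\eta(b)=0$, then the fundamental lemma on variations with $\eta(a)=0$ followed by a variation with $\eta(a)\neq 0$ to extract the natural boundary condition. The signs, the appearance of $A_{P^*}=\frac{d}{dt}\circ K_{P^*}$, and the regularity bookkeeping all check out against the standing hypotheses on \eqref{eq:F:1}.
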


\begin{cor}
{\rm (cf. \cite{OmPrakashAgrawal3})}
Let $0<\alpha<\frac{1}{q}$ and $\mathcal{I}$ be the functional given by
\begin{equation*}
\mathcal{I}(y)=\int\limits_a^b
F\left(y(t),_{a}^{C}\textsl{D}_t^\alpha [y](t),t\right)dt,
\end{equation*}
where $F\in C^1(\R^2\times [a,b];\R)$ and
$\Ilc\left[\partial_2 F\left(y(\tau),_{a}^{C}\textsl{D}_{\tau}^\alpha [y](\tau),\tau\right)\right]$
is continuously differentiable on $[a,b]$. If $\y\in C^1([a,b];\R)$ is a minimizer of $\mathcal{I}$
among all functions satisfying the boundary condition $y(b)=y_b$,
then $\y$ satisfies the Euler--Lagrange equation
\begin{equation*}
\partial_1 F\left(\y(t),_{a}^{C}\textsl{D}_t^\alpha [\y](t),t\right)
+_{t}\textsl{D}_b^\alpha\left[\partial_2 F\left(\y(\tau),
_{a}^{C}\textsl{D}_{\tau}^\alpha [\y](\tau),\tau\right)\right](t)=0
\end{equation*}
for all $t\in (a,b)$ and the fractional natural boundary condition
\begin{equation*}
\left._{t}\textsl{I}_b^{1-\alpha}\left[\partial_2
F\left(\y(\tau),_{a}^{C}\textsl{D}_{\tau}^\alpha [\y](\tau),
\tau\right)\right](t)\right|_{a}=0.
\end{equation*}
\end{cor}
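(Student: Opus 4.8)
The plan is to obtain this corollary as a direct specialization of the preceding Theorem of Section~\ref{sec:nat} to a particular choice of kernel and parameter set. First I would take $k^{\alpha}(t-\tau)=\frac{1}{\Gamma(1-\alpha)}(t-\tau)^{-\alpha}$ and $P=\langle a,t,b,1,0\rangle$; by the Example following the definition of $B_P$ in Section~\ref{sec:GFO} this yields $B_P=\Dcl$ (and, with the same kernel, $\K=\Ilc$), while the dual set $P^{*}=\langle a,t,b,0,1\rangle$ satisfies, by the same Example, $A_{P^{*}}=-\Dr$ and $K_{P^{*}}=\Irc$. Since the Lagrangian of the corollary carries only three arguments, I would regard it as a five-argument Lagrangian $\tilde F(x_1,x_2,x_3,x_4,t):=F(x_1,x_4,t)$ that does not depend on $x_2$ or $x_3$; then $\tilde F\in C^{1}(\R^{4}\times[a,b];\R)$, $\partial_2\tilde F\equiv\partial_3\tilde F\equiv 0$, and, in the positional notation of the paper, $\partial_4\tilde F=\partial_2 F$ and $\partial_1\tilde F=\partial_1 F$. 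With this identification, the functional of the corollary is precisely \eqref{eq:F:2} for the operators $\K$, $B_P$ and the Lagrangian $\tilde F$ just described.

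Next I would verify the standing hypotheses. The requirement that the kernel of $\K$ lie in $L^{q}(\Delta;\R)$ reduces to integrability of $(t-\tau)^{-\alpha q}$ over $\Delta$, and carrying out the iterated integral shows that this holds exactly because $0<\alpha<\frac1q$, i.e. $\alpha q<1$. That $\bar{y}\in\mathcal{A}(y_b)$ follows from $\bar{y}\in C^{1}([a,b];\R)$ with $y(b)=y_b$, together with the continuity on $[a,b]$ of $\Ilc[\bar{y}]$ and of $\Dcl[\bar{y}]=\Ilc[\dot{\bar{y}}]$, which is valid since $0<1-\alpha<1$ (in fact the two admissible classes coincide here). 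Of the three regularity conditions imposed on \eqref{eq:F:1}--\eqref{eq:F:2}, two become vacuous for $\tilde F$ because $\partial_2\tilde F$ and $\partial_3\tilde F$ vanish identically, and the third — continuous differentiability of $K_{P^{*}}\bigl[\partial_4\tilde F(\cdots)\bigr]=\Irc\bigl[\partial_2 F(\cdots)\bigr]$ — is exactly the regularity hypothesis made in the statement.

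Finally I would invoke the Theorem of Section~\ref{sec:nat}: $\bar{y}$ satisfies the Euler--Lagrange equation \eqref{eq:EL:OCM} and the natural boundary condition $\partial_3 F(\star_{\bar{y}})(t)|_{a}+K_{P^{*}}\bigl[\partial_4 F(\star_{\bar{y}})(\tau)\bigr](t)|_{a}=0$, and then translate both into the notation of the corollary. Substituting $\partial_2\tilde F\equiv\partial_3\tilde F\equiv 0$, $\partial_4\tilde F=\partial_2 F$, $A_{P^{*}}=-\Dr$ and $K_{P^{*}}=\Irc$, equation \eqref{eq:EL:OCM} collapses to $\partial_1 F(\bar{y}(t),\Dcl[\bar{y}](t),t)+\Dr\bigl[\partial_2 F(\bar{y}(\tau),\Dclt[\bar{y}](\tau),\tau)\bigr](t)=0$, and the natural boundary condition becomes $\Irc\bigl[\partial_2 F(\bar{y}(\tau),\Dclt[\bar{y}](\tau),\tau)\bigr](t)|_{a}=0$, which are precisely the two assertions to be proved. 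The only delicate point — and the easiest spot for a slip — is the bookkeeping of sides, slots and signs: the Caputo derivative must be placed in the $B_P$-slot (the fourth argument, not the $\K$-slot), passing to the dual set interchanges $\langle 1,0\rangle$ with $\langle 0,1\rangle$, and the right-sided operators carry the extra minus sign $A_{P^{*}}=-\Dr$ recorded in the Example of Section~\ref{sec:GFO}. Apart from that, the proof is a routine specialization with no substantive obstacle.
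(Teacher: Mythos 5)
Your proposal is correct and is essentially the paper's own (implicit) argument: the corollary is exactly the specialization of the free-initial-boundary theorem to the kernel $\frac{1}{\Gamma(1-\alpha)}(t-\tau)^{-\alpha}$ with $P=\langle a,t,b,1,0\rangle$, viewing $F$ as a five-argument Lagrangian independent of the $K_P$- and $\dot y$-slots, with $A_{P^*}=-{_{t}}\textsl{D}_b^{\alpha}$ and $K_{P^*}={_{t}}\textsl{I}_b^{1-\alpha}$ handled precisely as you describe. The only caveat is that the hypothesis as printed reads ${_{a}}\textsl{I}_t^{1-\alpha}\left[\partial_2 F\right]$ rather than the right-sided ${_{t}}\textsl{I}_b^{1-\alpha}\left[\partial_2 F\right]=K_{P^*}\left[\partial_4 \tilde F\right]$ that the theorem actually requires and that your verification uses; this appears to be a typo in the statement (the analogous corollary for the fundamental problem imposes the right-sided condition), so your reading is the correct one.
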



\subsection{Isoperimetric problem}
\label{sec:iso}

Let $P=\langle a,t,b,\lambda,\mu\rangle$.
We now define the following functional:
\begin{equation}
\label{eq:F:3}
\fonction{\mathcal{J}}{\mathcal{A}(y_a,y_b)}{\R}{y}{\displaystyle \int\limits_a^b
G\left(y(t),K_P[y](t),\dot{y}(t),B_P[y](t),t\right) \; dt ,}
\end{equation}
where by $\dot{y}$ we understand the classical derivative of $y$, $K_P$
is the generalized fractional integral operator with kernel belonging to
$L^q(\Delta;\R)$, $B_P=K_P\circ\frac{d}{dt}$, and $G$ is a Lagrangian
\begin{equation*}
\fonction{G}{\R^4 \times [a,b]}{\R}{(x_1,x_2,x_3,x_4,t)}{G(x_1,x_2,x_3,x_4,t)}
\end{equation*}
of class $C^1$. Moreover, we assume that
\begin{itemize}
\item $K_{P^*}\left[\partial_2 G (y(\tau),K_P[y](\tau),\dot{y}(\tau),B_P[y](\tau),\tau)\right]
\in C([a,b];\R)$,

\item $t\mapsto\partial_3 G (y(t),K_P[y](t),\dot{y}(t),B_P[y](t),t)
\in C^1([a,b];\R)$,

\item $K_{P^*}\left[\partial_4 G (y(\tau),K_P[y](\tau),\dot{y}(\tau),B_P[y](\tau),\tau)\right]
\in C^1([a,b];\R)$.
\end{itemize}
The main problem considered in this section consists to find a minimizer of functional \eqref{eq:F:1}
subject to the isoperimetric constraint $\mathcal{J}(y)=\xi$. In order to deal with this type
of problems, in the next theorem we provide a necessary optimality condition.

\begin{thm}
{\rm (cf. \cite{MyID:226})}
Suppose that $\y$ is a minimizer of functional $\mathcal{I}$ in the class
$$
\mathcal{A}_{\xi}(y_a,y_b):=\left\{y\in\mathcal{A}(y_a,y_b):\mathcal{J}(y)=\xi\right\}.
$$
Then there exists a real constant $\lambda_0$ such that, for $H=F-\lambda_0 G$, equation
\begin{multline}
\label{eq:EL:ISO1}
\frac{d}{dt}\left[\partial_3 H(\star_{\bar{y}})(t)\right]
+A_{P^*}\left[\partial_4 H(\star_{\bar{y}})(\tau)\right](t)\\
=\partial_1 H(\star_{\bar{y}})(t)+K_{P^*}\left[\partial_2
H(\star_{\bar{y}})(\tau)\right](t)
\end{multline}
holds for $t\in (a,b)$, provided
\begin{multline*}
\frac{d}{dt}\left[\partial_3 G(\star_{\bar{y}})(t)\right]
+A_{P^*}\left[\partial_4 G(\star_{\bar{y}})(\tau)\right](t)\\
\neq\partial_1 G(\star_{\bar{y}})(t)+K_{P^*}\left[\partial_2
G(\star_{\bar{y}})(\tau)\right](t),~t\in (a,b),
\end{multline*}
where $(\star_{\bar{y}})(t)=(\bar{y}(t),
K_P[\bar{y}](t),\dot{\bar{y}}(t),B_P[\bar{y}](t),t)$.
\end{thm}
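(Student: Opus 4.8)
The plan is to reduce the constrained (isoperimetric) problem to an unconstrained one by the standard Lagrange multiplier technique adapted to the generalized fractional setting, relying on Theorem~3.2 (the Euler--Lagrange equation for the fundamental problem). First I would take a minimizer $\y$ of $\mathcal{I}$ in $\mathcal{A}_\xi(y_a,y_b)$ and consider two-parameter variations $y = \y + h_1\eta_1 + h_2\eta_2$, where $\eta_1,\eta_2$ vanish at the endpoints $a$ and $b$ so that admissibility with respect to the boundary conditions is preserved, and where $|h_1|,|h_2|$ are small. Define $\hat{\mathcal{I}}(h_1,h_2) := \mathcal{I}(\y + h_1\eta_1 + h_2\eta_2)$ and $\hat{\mathcal{J}}(h_1,h_2) := \mathcal{J}(\y + h_1\eta_1 + h_2\eta_2)$, both $C^1$ near the origin by the regularity assumptions on $F$, $G$, $K_P$ and $B_P$.

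Next I would compute the partial derivatives of $\hat{\mathcal{J}}$ at the origin. Differentiating under the integral sign, using linearity of $K_P$ and $B_P$, and then applying the appropriate integration-by-parts / adjoint formulas — the relation $\int_a^b g\cdot K_P[\varphi]\,dt = \int_a^b \varphi\cdot K_{P^*}[g]\,dt$ for the dual set $P^*$, and ordinary integration by parts for the $\dot y$-term together with $B_P = K_P\circ\frac{d}{dt}$ — I get
\begin{equation*}
\left.\frac{\partial \hat{\mathcal{J}}}{\partial h_2}\right|_{(0,0)}
= \int_a^b \eta_2(t)\,\psi_G(t)\,dt,
\end{equation*}
where $\psi_G(t) = \partial_1 G(\star_{\y})(t) + K_{P^*}[\partial_2 G(\star_{\y})(\tau)](t) - \frac{d}{dt}[\partial_3 G(\star_{\y})(t)] - A_{P^*}[\partial_4 G(\star_{\y})(\tau)](t)$. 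The non-degeneracy hypothesis of the theorem says precisely that $\psi_G \not\equiv 0$ on $(a,b)$, so we may fix $\eta_2$ with $\left.\partial\hat{\mathcal{J}}/\partial h_2\right|_{(0,0)} \neq 0$. By the implicit function theorem there is a $C^1$ function $h_2 = h_2(h_1)$, defined for $h_1$ near $0$, with $h_2(0)=0$ and $\hat{\mathcal{J}}(h_1,h_2(h_1)) = \xi$; along this curve the variations stay inside $\mathcal{A}_\xi(y_a,y_b)$.

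Then, since $\y$ minimizes $\mathcal{I}$ over $\mathcal{A}_\xi(y_a,y_b)$, the function $h_1 \mapsto \hat{\mathcal{I}}(h_1,h_2(h_1))$ has a local minimum at $h_1=0$, so its derivative vanishes there. Combining this with the constraint equation $\hat{\mathcal{J}}(h_1,h_2(h_1))=\xi$ (whose $h_1$-derivative at $0$ also gives a relation between $h_2'(0)$ and the partials of $\hat{\mathcal{J}}$) yields, in the usual Lagrange-multiplier manner, the existence of a constant $\lambda_0$ — namely $\lambda_0 = \left.\partial\hat{\mathcal{I}}/\partial h_2\right|_{(0,0)} / \left.\partial\hat{\mathcal{J}}/\partial h_2\right|_{(0,0)}$ — such that $\left.\partial/\partial h_1\right|_{(0,0)}\bigl(\hat{\mathcal{I}} - \lambda_0\hat{\mathcal{J}}\bigr) = 0$ for the arbitrary admissible $\eta_1$. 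Writing this out and invoking the fundamental lemma of the calculus of variations (the du~Bois-Reymond / vanishing-integral argument, valid because the bracketed expression is continuous on $[a,b]$ by the standing assumptions) gives that $H = F - \lambda_0 G$ satisfies the Euler--Lagrange equation \eqref{eq:EL:ISO1}; this last identification is exactly the content of Theorem~3.2 applied to $H$. The main obstacle is the careful justification of differentiating the functionals under the integral sign and of the adjoint/integration-by-parts manipulations with the generalized operators $K_P$, $B_P$, $A_{P^*}$ — ensuring all the boundary terms either vanish (because $\eta_1,\eta_2$ vanish at $a,b$) or are absorbed correctly, and that the resulting integrands satisfy the continuity needed for the fundamental lemma; these hinge on the $L^q$-kernel hypothesis and the three regularity bullet points accompanying \eqref{eq:F:3}.
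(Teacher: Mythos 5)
Your proposal is correct and follows essentially the same route as the paper: the paper defers the proof of this theorem to \cite{MyID:226} but proves the $r$-constraint generalization (Theorem~3.9) by reducing to the finite-dimensional functions $\phi,\psi_k$ of the variation parameters and invoking the classical Lagrange multiplier rule, of which your two-parameter variation plus implicit-function-theorem argument is the standard concrete instantiation for $r=1$. Your identification of the non-degeneracy hypothesis with the rank/invertibility condition needed to solve the constraint for $h_2(h_1)$, and the final appeal to the fundamental lemma, are exactly the intended mechanism.
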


\example
{\rm (cf. \cite{MyID:226})}
Let $P=\langle 0,t,1,1,0\rangle$.
Consider the problem
\begin{equation*}
\begin{gathered}
\mathcal{I}(y)=\int\limits_0^1\left({\textsl{K}_P} [y](t)
+t\right)^2dt \longrightarrow \min,\\
\mathcal{J}(y)=\int\limits_0^1 t{\textsl{K}_P} [y](t)\;dt = \xi,\\
y(0)=\xi-1, \quad y(1)=(\xi-1)\left(1
+\int\limits_0^1 u(1-\tau) d\tau\right),
\end{gathered}
\end{equation*}
where the kernel $k$ is such that $k(t,\tau)=h(t-\tau)$ with $h\in C^1([0,1];\R)$,
$h(0)=1$ and $\textsl{K}_{P^*}[id](t)\neq 0$ ($id$ stands for the identity transformation,
i.e., $id(t)=t$). Here the resolvent $u$ is related to the kernel $h$ in the same way as
in Example~3.3. Since $\textsl{K}_{P^*}[id](t)\neq 0$, there is no solution
to the Euler--Lagrange equation for functional $\mathcal{J}$.
The augmented Lagrangian $H$ of Theorem~3.7 is given by
$H(x_1,x_2,t) =(x_2+t)^2 -\lambda_0 tx_2$. Function
\begin{equation*}
y(t) = \left(\xi-1\right) \left( 1 +\int\limits_0^t u(t-\tau)d\tau \right)
\end{equation*}
is the solution to the Volterra integral equation $\textsl{K}_{P}[y](t)=(\xi-1)t$
of the first kind (see, e.g., Eq.~16, p.~114
of \cite{book:Polyanin}) and for $\lambda_0=2\xi$ satisfies
our optimality condition \eqref{eq:EL:ISO1}:
\begin{equation}
\label{eq:noc:ex2}
\textsl{K}_{P^*}\left[2\left(\textsl{K}_P[y](\tau)+\tau\right)
-2\xi \tau\right](t)=0.
\end{equation}
The solution of \eqref{eq:noc:ex2} subject to the given boundary conditions
depends on the particular choice for the kernel. For example, let
$h^{\alpha}(t-\tau)=e^{\alpha(t-\tau)}$. Then the solution of \eqref{eq:noc:ex2}
subject to the boundary conditions $y(0)=\xi-1$ and $y(1)=(\xi-1)(1-\alpha)$
is $y(t)=(\xi-1)(1-\alpha t)$ (cf. p.~15 of \cite{book:Polyanin}).
If $h^{\alpha}(t-\tau)=\cos\left(\alpha(t-\tau)\right)$, then the boundary
conditions are $y(0)=\xi-1$ and $y(1)=(\xi-1)\left(1+\alpha^2/2\right)$,
and the extremal is $y(t)=(\xi-1)\left(1+\alpha^2 t^2/2\right)$
(cf. p.~46 of \cite{book:Polyanin}).
\dl

Borrowing different kernels from book \cite{book:Polyanin},
many other examples of dynamic optimization problems can be
explicitly solved by application of the results of this section.

As particular cases of our generalized problem \eqref{eq:F:1}, \eqref{eq:F:3},
one obtains previously studied fractional isoperimetric problems
with Caputo derivatives.

\begin{cor}
{\rm (cf. \cite{MyID:207})}
Let $\y\in C^1([a,b];\R)$ be a minimizer to the functional
\begin{equation*}
\mathcal{I}(y)=\int\limits_a^b F\left(y(t),\dot{y}(t),\Dcl [y](t),t\right) dt
\end{equation*}
subject to an isoperimetric constraint of the form
\begin{equation*}
\mathcal{J}(y)=\int\limits_a^b
G\left(y(t),\dot{y}(t),\Dcl [y](t),t\right)dt=\xi
\end{equation*}
and boundary conditions
\begin{equation*}
y(a)=y_a,~~y(b)=y_b,
\end{equation*}
where $0<\alpha<\frac{1}{q}$ and functions $F$ and $G$ are such that
\begin{itemize}
\item $F,G\in C^1(\R^3\times [a,b];\R)$,
\item $t\mapsto\partial_2 F\left(y(t),\dot{y}(t),\Dcl [y](t),t\right)$,
$t\mapsto\partial_2 G\left(y(t),\dot{y}(t),\Dcl [y](t),t\right)$,
$$
\Irc\left[\partial_3 F\left(y(\tau),\dot{y}(\tau),\Dclt [y](\tau),\tau\right)\right],
$$
and
$$
\Irc\left[\partial_3 G\left(y(\tau),\dot{y}(\tau),\Dclt [y](\tau),\tau\right)\right]
$$
are continuously differentiable on $[a,b]$.
\end{itemize}
If $\y$ is  such that
\begin{multline*}
\partial_1 G\left(\y(t),\dot{\y}(t),\Dcl [\y](t),t\right)
-\frac{d}{dt}\left(\partial_2 G\left(\y(t),\dot{\y}(t),\Dcl [\y](t),t\right)\right)\\
+\Dr\left[
\partial_3 G\left(\y(\tau),\dot{\y}(\tau),\Dclt [\y](\tau),\tau\right)\right](t)\neq 0,
\end{multline*}
then there exists a constant $\lambda_0$ such that $\y$ satisfies
\begin{multline*}
\partial_1 H\left(\y(t),\dot{\y}(t),\Dcl [\y](t),t\right)-\frac{d}{dt}\left(
\partial_2 H\left(\y(t),\dot{\y}(t),\Dcl [\y](t),t\right)\right)\\
+\Dr\left[
\partial_3 H\left(\y(\tau),\dot{\y}(\tau),\Dclt [\y](\tau),
\tau\right)\right](t) = 0
\end{multline*}
with $H=F-\lambda_0 G$.
\end{cor}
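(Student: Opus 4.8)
The plan is to obtain this corollary as a direct specialization of Theorem~3.7 (the generalized fractional isoperimetric theorem). The first step is to recognize the left Caputo derivative $\Dcl$ as the particular instance of the operator $B_P$ corresponding to the kernel $k^{\alpha}(t,\tau)=\frac{1}{\Gamma(1-\alpha)}(t-\tau)^{-\alpha}$ and the set $P=\langle a,t,b,1,0\rangle$, exactly as recorded in the example following the definition of $B_P$ (the one producing the standard derivatives \eqref{eq:FD:RL}--\eqref{eq:FD:C}). For this choice one checks that $k^{\alpha}\in L^q(\Delta;\R)$ precisely when $\alpha q<1$, i.e.\ under the standing hypothesis $0<\alpha<\frac{1}{q}$; hence the regularity framework attached to functional~\eqref{eq:F:1} and~\eqref{eq:F:3} is legitimately available.

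Next I would embed the problem of the corollary into the form \eqref{eq:F:1}, \eqref{eq:F:3}. Since the Lagrangians $F$ and $G$ of the corollary do not depend on $K_P[y]$, I set $\widetilde F(x_1,x_2,x_3,x_4,t):=F(x_1,x_3,x_4,t)$ and $\widetilde G(x_1,x_2,x_3,x_4,t):=G(x_1,x_3,x_4,t)$, so that $\partial_2\widetilde F=\partial_2\widetilde G\equiv 0$, while $\partial_1\widetilde F=\partial_1 F$, $\partial_3\widetilde F=\partial_2 F$, $\partial_4\widetilde F=\partial_3 F$, and similarly for $G$. Under this correspondence, and using $B_P[y]=\Dcl[y]$ together with $P^*=\langle a,t,b,0,1\rangle$, hence $K_{P^*}={_t}\textsl{I}_b^{1-\alpha}=\Irc$, the three bulleted smoothness hypotheses preceding Theorem~3.7 reduce to those of the corollary: the conditions on $K_{P^*}[\partial_2\widetilde F]$ and $K_{P^*}[\partial_2\widetilde G]$ hold trivially (the operator is applied to the zero function), the condition on $t\mapsto\partial_3\widetilde F$ is the stated continuous differentiability of $t\mapsto\partial_2 F$, and the condition on $K_{P^*}[\partial_4\widetilde F]$ is the stated continuous differentiability of $\Irc[\partial_3 F]$ (and likewise for $G$). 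One also notes that for $y\in C^1([a,b];\R)$ both $K_P[y]$ and $B_P[y]$ are continuous, since the kernel is integrable in $\tau$ for $\alpha<1$, so $\y\in\mathcal{A}(y_a,y_b)$ and the isoperimetric class $\mathcal{A}_\xi(y_a,y_b)$ makes sense.

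Finally, I invoke Theorem~3.7. Using $A_{P^*}[\cdot]=-\Dr[\cdot]$ for $P^*=\langle a,t,b,0,1\rangle$ (again from the same example), the non-degeneracy hypothesis of Theorem~3.7 written for $\widetilde G$ is exactly the displayed inequality of the corollary; hence there is $\lambda_0\in\R$ for which the Euler--Lagrange equation \eqref{eq:EL:ISO1} holds with $\widetilde H=\widetilde F-\lambda_0\widetilde G$. Rewriting \eqref{eq:EL:ISO1} in the original notation — the term $K_{P^*}[\partial_2\widetilde H]$ vanishes, $\frac{d}{dt}[\partial_3\widetilde H]$ becomes $\frac{d}{dt}\partial_2 H$, and $A_{P^*}[\partial_4\widetilde H]$ becomes $-\Dr[\partial_3 H]$ with $H=F-\lambda_0 G$ — gives precisely the asserted Euler--Lagrange equation, which completes the proof. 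The only mildly delicate point is the $L^q(\Delta;\R)$ membership of the singular kernel $k^{\alpha}$ under $0<\alpha<\frac{1}{q}$ and the careful matching of the three regularity hypotheses across the change of variables in the Lagrangian; everything else is a transcription of Theorem~3.7.
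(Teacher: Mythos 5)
Your proposal is correct and follows exactly the route the paper intends: the corollary is obtained by specializing Theorem~3.7 to the kernel $k^{\alpha}(t,\tau)=\frac{1}{\Gamma(1-\alpha)}(t-\tau)^{-\alpha}$ with $P=\langle a,t,b,1,0\rangle$, so that $B_P=\Dcl$, $K_{P^*}=\Irc$ and $A_{P^*}=-\Dr$, after lifting the three-argument Lagrangians to four-argument ones with vanishing $\partial_2$. The bookkeeping of partial derivatives, the sign of $A_{P^*}$, the reduction of the regularity and non-degeneracy hypotheses, and the $L^q$ check under $0<\alpha<\frac{1}{q}$ are all handled correctly.
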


Theorem~3.7 can be easily extended to $r$ subsidiary conditions of integral type.
Let $G_k$, $k=1,\dots, r$, be Lagrangians
\begin{equation*}
\fonction{G_k}{\R^4 \times [a,b]}{\R}{(x_1,x_2,x_3,x_4,t)}{G_k(x_1,x_2,x_3,x_4,t)}
\end{equation*}
of class $C^1$, and let
\begin{equation*}
\fonction{\mathcal{J}_k}{\mathcal{A}(y_a,y_b)}{\R}{y}{\displaystyle \int\limits_a^b
G_k(y(t),K_P[y](t),\dot{y}(t),B_P[y](t),t) \; dt,}
\end{equation*}
where $\dot{y}$ denotes the classical derivative of $y$, $K_P$ the generalized
fractional integral operator with a kernel belonging to $L^q(\Delta;\R)$,
and $B_P=K_P\circ\frac{d}{dt}$. Moreover, we assume that
\begin{itemize}
\item $K_{P^*}\left[\partial_2 G_k (y(\tau),K_P[y](\tau),
\dot{y}(\tau),B_P[y](\tau),\tau)\right]\in C([a,b];\R)$,
\item $t\mapsto\partial_3 G_k (y(t),K_P[y](t),
\dot{y}(t),B_P[y](t),t)\in C^1([a,b];\R)$,
\item $K_{P^*}\left[\partial_4 G_k (y(\tau),K_P[y](\tau),
\dot{y}(\tau),B_P[y](\tau),\tau)\right]\in C^1([a,b];\R)$.
\end{itemize}
Suppose that $\xi=(\xi_1,\dots,\xi_r)$ and define
$$
\mathcal{A}_{\xi}(y_a,y_b):=\left\{y\in\mathcal{A}(y_a,y_b):
\mathcal{J}_k[y]=\xi_k,\; k=1\dots,r\right\}.
$$
Next theorem gives necessary optimality condition for a minimizer
of functional $\mathcal{I}$ subject to $r$ isoperimetric constraints.

\begin{thm}
Let $\y$ be a minimizer of $\mathcal{I}$ in the class $\mathcal{A}_{\xi}(y_a,y_b)$.
If one can find functions $\eta_1,\dots,\eta_r\in\mathcal{A}(0,0)$
such that the matrix $A=\left(a_{kl}\right)$,
\begin{multline*}
a_{kl}:=\int\limits_a^b\; \left(\partial_1 G_k(\star_{\bar{y}})(t)
+K_{P^*}\left[\partial_2 G_k(\star_{\bar{y}})(\tau)\right](t)\right)\cdot\eta_l(\tau)\\
+\left(\partial_3 G_k(\star_{\bar{y}})(t)+K_{P^*}\left[
\partial_4 G_k(\star_{\bar{y}})(\tau)\right](t)\right)\cdot\dot{\eta_l}(t)\; dt,
\end{multline*}
has rank equal to $r$, then there exist $\lambda_1,\dots,\lambda_r\in\R$ such that $\y$ satisfies
\begin{multline}
\label{eq:EL:ISO2}
\frac{d}{dt}\left[\partial_3 H(\star_{\bar{y}})(t)\right]
+A_{P^*}\left[\partial_4 H(\star_{\bar{y}})(\tau)\right](t)\\
=\partial_1 H(\star_{\bar{y}})(t)+K_{P^*}\left[\partial_2
H(\star_{\bar{y}})(\tau)\right](t),
\quad t\in (a,b),
\end{multline}
where $(\star_{\bar{y}})(t)
=(\bar{y}(t),K_P[\bar{y}](t),\dot{\bar{y}}(t),B_P[\bar{y}](t),t)$ and
$H=F-\sum\limits_{k=1}^r\lambda_kG_k$.
\end{thm}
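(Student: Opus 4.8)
The plan is to reduce the problem of $r$ isoperimetric constraints to a finite-dimensional multiplier argument, mirroring the proof of Theorem~3.7 but working with an $(r{+}1)$-parameter family of admissible variations. First I would fix the minimizer $\y$ together with variations $\eta_0,\eta_1,\dots,\eta_r\in\mathcal{A}(0,0)$, where $\eta_1,\dots,\eta_r$ are the functions in the hypothesis guaranteeing $\operatorname{rank}A=r$, and $\eta_0$ is an arbitrary element of $\mathcal{A}(0,0)$ which we eventually want to range over. Define $\phi:\R^{r+1}\to\R$ by $\phi(\varepsilon_0,\dots,\varepsilon_r)=\mathcal{I}\!\left(\y+\sum_{j=0}^r\varepsilon_j\eta_j\right)$ and, for $k=1,\dots,r$, $\psi_k(\varepsilon_0,\dots,\varepsilon_r)=\mathcal{J}_k\!\left(\y+\sum_{j=0}^r\varepsilon_j\eta_j\right)-\xi_k$. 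These are $C^1$ in a neighbourhood of the origin because $F$ and $G_k$ are $C^1$, the operators $K_P$, $B_P$ are linear and bounded on the relevant spaces, and differentiation under the integral sign is justified by the continuity assumptions imposed on the compositions (the same assumptions used to make Theorem~3.2 and Theorem~3.7 work).

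Next I would compute the partial derivatives at the origin. Using the linearity of $K_P$ and $B_P$, the chain rule, integration by parts for the $\dot\eta_j$ term, and the generalized fractional integration-by-parts formula that moves $K_P$ onto $K_{P^*}$ (the device underlying the appearance of $P^*$ throughout Section~3), one gets
\begin{equation*}
\frac{\partial\psi_k}{\partial\varepsilon_l}(0)
=\int_a^b\Big(\partial_1 G_k(\star_{\y})(t)+K_{P^*}\big[\partial_2 G_k(\star_{\y})(\tau)\big](t)\Big)\eta_l(t)
+\Big(\partial_3 G_k(\star_{\y})(t)+K_{P^*}\big[\partial_4 G_k(\star_{\y})(\tau)\big](t)\Big)\dot\eta_l(t)\,dt,
\end{equation*}
which for $l=1,\dots,r$ is exactly the matrix entry $a_{kl}$, so the $r\times r$ block $\big(\partial\psi_k/\partial\varepsilon_l\big)_{1\le k,l\le r}=A$ is invertible by hypothesis. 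An identical computation gives $\partial\phi/\partial\varepsilon_l(0)$ with $F$ in place of $G_k$, and the same formulas with $l=0$.

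Then I would invoke the Lagrange multiplier rule in $\R^{r+1}$: since $(0,\dots,0)$ is a local minimum of $\phi$ subject to $\psi_1=\dots=\psi_r=0$ and the constraint gradients are independent (the Jacobian contains the invertible block $A$), there exist $\lambda_1,\dots,\lambda_r\in\R$ with $\nabla\phi(0)=\sum_{k=1}^r\lambda_k\nabla\psi_k(0)$. Crucially, the standard trick is to solve first for $\lambda_1,\dots,\lambda_r$ from the $r$ equations indexed by $l=1,\dots,r$ using $A^{-1}$ — this fixes the multipliers once and for all, independently of $\eta_0$ — and then read the remaining equation, indexed by $l=0$, as
\begin{equation*}
\int_a^b\Big(\partial_1 H(\star_{\y})(t)+K_{P^*}\big[\partial_2 H(\star_{\y})(\tau)\big](t)\Big)\eta_0(t)
+\Big(\partial_3 H(\star_{\y})(t)+K_{P^*}\big[\partial_4 H(\star_{\y})(\tau)\big](t)\Big)\dot\eta_0(t)\,dt=0,
\end{equation*}
with $H=F-\sum_{k=1}^r\lambda_k G_k$. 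Since $\eta_0\in\mathcal{A}(0,0)$ was arbitrary, I would integrate the $\dot\eta_0$ term by parts (legitimate by the $C^1$ assumptions on the $\partial_3$ and $K_{P^*}[\partial_4]$ compositions), collect everything against $\eta_0(t)$, and apply the fundamental lemma of the calculus of variations to conclude that \eqref{eq:EL:ISO2} holds pointwise on $(a,b)$; rewriting $\frac{d}{dt}K_{P^*}[\partial_4 H]$ as $A_{P^*}[\partial_4 H]$ per the definition of $A_{P^*}$ puts it in the stated form.

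The main obstacle is the bookkeeping that makes the multipliers genuinely independent of the free variation $\eta_0$: one must be careful that the rank-$r$ condition is used to pin down $(\lambda_1,\dots,\lambda_r)$ from the $\eta_1,\dots,\eta_r$ equations alone, so that the $l=0$ equation is a true consequence rather than an extra constraint on $\eta_0$. A secondary technical point is justifying differentiation under the integral sign and the integration-by-parts step uniformly for the two-parameter-then-$(r{+}1)$-parameter family — but this is exactly the role of the continuity/differentiability hypotheses listed before the theorem, and it parallels what is already done for Theorems~3.2 and~3.7, so I would simply cite that machinery rather than redo it.
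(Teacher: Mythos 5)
Your proposal is correct and follows essentially the same route as the paper: the same $(r{+}1)$-parameter family $\phi,\psi_1,\dots,\psi_r$, the classical Lagrange multiplier rule made applicable by the rank-$r$ (hence invertible) block $A$, the observation that $\lambda_1,\dots,\lambda_r$ are determined by the $\eta_1,\dots,\eta_r$ equations alone and so are independent of $\eta_0$, and finally integration by parts plus the fundamental lemma of the calculus of variations applied to the $\eta_0$-equation. You in fact spell out the first-variation computation and the passage from the weak to the pointwise Euler--Lagrange equation more explicitly than the paper does.
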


\begin{proof}
Let us define
\begin{equation*}
\fonction{\phi}{[-\varepsilon_0,\varepsilon_0]
\times\dots\times [-\varepsilon_r,\varepsilon_r]}{\R}{(h_0,h_1,
\dots,h_r)}{\mathcal{I}(\bar{y}+h_0\eta_0+h_1\eta_1+\dots+h_r\eta_r)}
\end{equation*}
and
\begin{equation*}
\fonction{\psi_k}{[-\varepsilon_0,\varepsilon_0]
\times\dots\times [-\varepsilon_r,\varepsilon_r]}{\R}{(h_0,h_1,\dots,h_r)}{\mathcal{J}_k(\bar{y}
+h_0\eta_0+h_1\eta_1+\dots+h_r\eta_r)-\xi_k.}
\end{equation*}
Observe that $\phi$ and $\psi_k$ are functions of class $C^1\left([-\varepsilon_0,\varepsilon_0]
\times\dots\times[-\varepsilon_r,\varepsilon_r];\R\right)$,
$A=\left(\left.\frac{\partial\psi_k}{\partial h_l}\right|_{0}\right)$ and that
$\psi_k (0,0,\dots,0)=0$. Moreover, because $\y$ is a minimizer of functional $\mathcal{I}$, we have
$$
\phi(0,\dots,0)\leq\phi(h_0,h_1,\dots,h_r).
$$
From the classical multiplier theorem,
there exist $\lambda_1,\dots,\lambda_r\in\R$ such that
\begin{equation}
\label{eq:ISOpf:1}
\nabla\phi_l(0,\dots,0)
+\sum\limits_{k=1}^r \lambda_k\nabla\psi_{k}(0,\dots,0)=0.
\end{equation}
From \eqref{eq:ISOpf:1} we can compute $\lambda_1,\dots,\lambda_r$,
independently of the choice of $\eta_0\in\mathcal{A}(0,0)$.
Finally, we arrive to \eqref{eq:EL:ISO2}
by the fundamental lemma of the calculus of variations.
\end{proof}


\subsection{Noether's theorem}
\label{sec:NTH:sing}

Emmy Noether's classical work \cite{Noether} from 1918 states that a conservation law
in variational mechanics follow whenever the Lagrangian function is invariant under
a one-parameter continuous group of transformations, that transform dependent and/or
independent variables. For instance, conservation of energy \eqref{eq:energ}
follows from invariance of the Lagrangian with respect to time-translations.
Noether's theorem not only unifies conservation laws but also suggests
a way to discover new ones. In this section we consider variational problems that depend
on generalized fractional integrals and derivatives. Following the methods used in
\cite{Cresson,MR2338631,book:Jost}, we apply Euler--Lagrange equations to formulate a generalized
fractional version of Noether's theorem without transformation of time. We start by introducing
the notions of generalized fractional extremal and one-parameter family of infinitesimal transformations.

\begin{definition}
A function $y\in C^1\left([a,b];\R\right)$ with $K_P[y]$ and $B_P[y]$ in $C\left([a,b];\R\right)$
that is a solution to equation \eqref{eq:EL:OCM} is said to be a generalized fractional extremal.
\end{definition}

We consider a one-parameter family of transformations of the form
$\hat{y}(t)=\phi(\theta,t,y(t))$, where $\phi$ is a map
\begin{equation*}
\fonction{\phi}{[-\varepsilon,\varepsilon]\times [a,b]
\times\R}{\R}{(\theta,t,x)}{\phi(\theta,t,x)}
\end{equation*}
of class $C^2$ such that $\phi(0,t,x)=x$.
Note that, using Taylor's expansion of $\phi(\theta,t,y(t))$ in a neighborhood of $0$, one has
\begin{equation*}
\hat{y}(t)=\phi(0,t,y(t))+\theta\frac{\partial}{\partial\theta}\phi(0,t,y(t))+o(\theta),
\end{equation*}
provided $\theta\in [-\varepsilon,\varepsilon]$. Moreover, having in mind that
$\phi(0,t,y(t))=y(t)$ and denoting
$\xi(t,y(t))=\frac{\partial}{\partial\theta}\phi(0,t,y(t))$, one has
\begin{equation}
\label{eq:Tr}
\hat{y}(t)=y(t)+\theta\xi(t,y(t))+o(\theta),
\end{equation}
so that the linear approximation to the transformation is
\begin{equation*}
\hat{y}(t)\approx y(t)+\theta\xi(t,y(t))
\end{equation*}
for $\theta\in [-\varepsilon,\varepsilon]$.
Now, let us introduce the notion of invariance.

\begin{definition}
We say that a Lagrangian $F$ is invariant under the one-parameter family
of infinitesimal transformations \eqref{eq:Tr}, where $\xi$ is such that
$t\mapsto\xi(t,y(t))\in C^1\left([a,b];\R\right)$ with
$K_P\left[\tau\mapsto\xi(\tau,y(\tau))\right],
B_P\left[\tau\mapsto\xi(\tau,y(\tau))\right]\in C\left([a,b];\R\right)$, if
\begin{equation}
\label{eq:CI:1}
F\left(y(t),K_P[y](t),\dot{y}(t),B_P[y](t),t\right)
= F\left(\hat{y}(t),K_P[\hat{y}](t),\dot{\hat{y}}(t),B_P[\hat{y}](t),t\right)
\end{equation}
for all $\theta\in [-\varepsilon,\varepsilon]$
and  all $y\in C^1\left([a,b];\R\right)$
with $K_P[y],B_P[y]\in C\left([a,b];\R\right)$.
\end{definition}

In order to state Noether's theorem in a compact form,
we introduce the following two bilinear operators:
\begin{equation}
\label{eq:BD:1}
\mathbf{D}[f,g]:=f\cdot A_{P^*}[g]+g\cdot B_P[f],
\end{equation}
\begin{equation}
\label{eq:BI:1}
\mathbf{I}[f,g]:=-f\cdot K_{P^*}[g]+g\cdot K_P[f].
\end{equation}

\begin{thm}
{\rm (Generalized Fractional Noether's Theorem)}
Let $F$ be invariant under the one-parameter family of infinitesimal
transformations \eqref{eq:Tr}. Then, for every generalized
fractional extremal $y$, the following equality holds:
\begin{multline}
\label{eq:NTH:1}
\frac{d}{dt}\left(\xi(t,y(t))\cdot\partial_3 F(\star_y)(t)\right)
+\mathbf{D}\left[\xi(t,y(t)),\partial_4 F(\star_y)(t)\right]\\
+\mathbf{I}\left[\xi(t,y(t)),\partial_2 F(\star_y)(t)\right]=0,
\end{multline}
$t\in (a,b)$, where $(\star_y)(t)=\left(y(t),K_P[y](t),\dot{y}(t),B_P[y](t),t\right)$.
\end{thm}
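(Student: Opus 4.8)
The plan is to differentiate the invariance identity \eqref{eq:CI:1} with respect to the parameter $\theta$ at $\theta=0$, and then use the Euler--Lagrange equation \eqref{eq:EL:OCM} together with suitable ``integration by parts'' rules for the generalized operators $K_P$, $A_P$ and $B_P$ to collapse the resulting expression into the total derivative/bilinear-operator form \eqref{eq:NTH:1}. First I would write $\hat{y}$ as in \eqref{eq:Tr}, note that by linearity $K_P[\hat y]=K_P[y]+\theta K_P[\xi]+o(\theta)$ and similarly $B_P[\hat y]=B_P[y]+\theta B_P[\xi]+o(\theta)$ and $\dot{\hat y}=\dot y+\theta\dot\xi+o(\theta)$ (here and below $\xi$ abbreviates $\xi(t,y(t))$, and $K_P[\xi]$ abbreviates $K_P[\tau\mapsto\xi(\tau,y(\tau))]$, which lie in the required spaces by the hypothesis in Definition~3.13). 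Differentiating \eqref{eq:CI:1} in $\theta$ at $0$ then gives, pointwise in $t\in(a,b)$,
\begin{multline*}
0=\partial_1 F(\star_y)\cdot\xi+\partial_2 F(\star_y)\cdot K_P[\xi]
+\partial_3 F(\star_y)\cdot\dot\xi\\
+\partial_4 F(\star_y)\cdot B_P[\xi],
\end{multline*}
where all $F$-derivatives are evaluated at $(\star_y)(t)=(y(t),K_P[y](t),\dot y(t),B_P[y](t),t)$.

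The second step is to eliminate the ``bare'' multipliers $\partial_1 F$ and $\partial_3 F$ using the Euler--Lagrange equation. Since $y$ is a generalized fractional extremal, \eqref{eq:EL:OCM} lets me substitute
\[
\partial_1 F(\star_y)(t)=\frac{d}{dt}\bigl[\partial_3 F(\star_y)(t)\bigr]
+A_{P^*}\bigl[\partial_4 F(\star_y)\bigr](t)
-K_{P^*}\bigl[\partial_2 F(\star_y)\bigr](t)
\]
into the $\partial_1 F\cdot\xi$ term. The $\partial_3 F\cdot\dot\xi$ term combines with $\xi\cdot\frac{d}{dt}[\partial_3 F]$ to give exactly $\frac{d}{dt}\bigl(\xi\cdot\partial_3 F(\star_y)\bigr)$, the first term of \eqref{eq:NTH:1}. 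What remains is to recognize the leftover four terms as the two bilinear operators: collecting the $\partial_4 F$ contributions, $\xi\cdot A_{P^*}[\partial_4 F(\star_y)]+\partial_4 F(\star_y)\cdot B_P[\xi]$ is precisely $\mathbf{D}[\xi,\partial_4 F(\star_y)]$ by \eqref{eq:BD:1}; and collecting the $\partial_2 F$ contributions, $-\xi\cdot K_{P^*}[\partial_2 F(\star_y)]+\partial_2 F(\star_y)\cdot K_P[\xi]$ is exactly $\mathbf{I}[\xi,\partial_2 F(\star_y)]$ by \eqref{eq:BI:1}. This yields \eqref{eq:NTH:1}.

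The step I expect to be the main obstacle is the first one: justifying that differentiation under the invariance relation is legitimate and that the remainder terms are genuinely $o(\theta)$ uniformly enough to pass to the limit. Concretely, this requires the $C^2$ regularity of $\phi$ (hence $C^1$ dependence of $\hat y$, $K_P[\hat y]$, $B_P[\hat y]$, $\dot{\hat y}$ on $\theta$), the $C^1$ Lagrangian hypothesis so that the chain rule applies, and the continuity/integrability assumptions on the kernel (kernel in $L^q(\Delta;\R)$) and on the composite functions that make $K_P[\xi]$, $B_P[\xi]$ well defined and continuous; one must also check that for each fixed small $\theta$ the perturbed function $\hat y$ is still an admissible competitor, so that \eqref{eq:CI:1} genuinely holds as an identity in $\theta$ and $t$ and may be differentiated. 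Once differentiation is justified, the remaining manipulations are purely algebraic rearrangements using the definitions of $\mathbf{D}$, $\mathbf{I}$ and the Euler--Lagrange equation, with no further analytic subtlety.
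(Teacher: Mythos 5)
Your proposal is correct and follows essentially the same route as the paper: differentiate the invariance identity \eqref{eq:CI:1} in $\theta$ at $\theta=0$ using the chain rule and linearity of $K_P$ and $B_P$, then substitute the Euler--Lagrange equation \eqref{eq:EL:OCM} and regroup the terms via \eqref{eq:BD:1}--\eqref{eq:BI:1}. You merely spell out the final algebraic regrouping (and the differentiability justification) that the paper leaves implicit.
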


\begin{proof}
By equation \eqref{eq:CI:1} one has
\begin{equation*}
\left.\frac{d}{d\theta}\left[F\left(\hat{y}(t),K_P[\hat{y}](t),
\dot{\hat{y}}(t),B_P[\hat{y}](t),t\right)\right]\right|_{\theta=0}=0.
\end{equation*}
The usual chain rule implies that
\begin{multline*}
\Biggl.\partial_1 F(\star_{\hat{y}})(t)\cdot\frac{d}{d \theta}\hat{y}(t)
+\partial_2 F(\star_{\hat{y}})(t)\cdot\frac{d}{d \theta}K_P[\hat{y}](t)\\
+\partial_3 F(\star_{\hat{y}})(t)\cdot\frac{d}{d\theta}\dot{\hat{y}}(t)
+\partial_4 F(\star_{\hat{y}})(t)\cdot
\frac{d}{d\theta}B_P[\hat{y}](t)\Biggr|_{\theta=0}=0.
\end{multline*}
By linearity of $K_P$ and $B_P$, differentiating with respect
to $\theta$, and putting $\theta=0$,
\begin{multline*}
\partial_1 F(\star_y)(t)\cdot\xi(t,y(t))
+\partial_2 F(\star_y)(t)\cdot K_P[\tau\mapsto\xi(\tau,y(\tau))](t)\\
+\partial_3 F(\star_y)(t)\cdot\frac{d}{dt}\xi(t,y(t))
+\partial_4 F(\star_y)(t)\cdot B_P[\tau\mapsto\xi(\tau,y(\tau))](t)=0.
\end{multline*}
We obtain \eqref{eq:NTH:1} using the generalized Euler--Lagrange equation \eqref{eq:EL:OCM}
and \eqref{eq:BD:1}--\eqref{eq:BI:1}.
\end{proof}

\fudl

\example
Let $P=\langle a,t,b,\lambda,\mu\rangle$, $y\in C^1\left([a,b];\R\right)$,
$B_P[y]\in C\left([a,b];\R\right)$.
Consider Lagrangian $F\left(B_P[y](t),t\right)$
and transformations
\begin{equation}
\label{eq:Tr:2}
\hat{y}(t)=y(t)+\varepsilon c+o(\varepsilon),
\end{equation}
where $c$ is a constant. Then, we have
\begin{equation*}
F\left(B_P[y](t),t\right)=
F\left(B_P[\hat{y}](t),t\right).
\end{equation*}
Therefore, $F$ is invariant under
\eqref{eq:Tr:2} and the generalized fractional Noether's theorem asserts that
\begin{equation}
\label{eq:ex:NTH}
A_{P^*}[\partial_1 F\left(B_P[y](\tau),\tau\right)](t)=0,
\quad t\in (a,b),
\end{equation}
along any generalized fractional extremal $y$.
Notice that equation \eqref{eq:ex:NTH} can be written in the form
\begin{equation*}
\frac{d}{dt}\left(K_{P^*}[\partial_1
F\left(B_P[y](\tau),\tau\right)](t)\right)=0,
\end{equation*}
that is, quantity $K_{P^*}[\partial_1 F\left(B_P[y](\tau),\tau\right)]$
is conserved along all generalized fractional extremals and this quantity,
following the classical approach, can be called
a generalized fractional constant of motion.


\subsection{The multidimensional fractional calculus of variations}
\label{sec:SEV}

One can generalize results from Section~\ref{sec:fund} to the case of several variables.
In order to define the multidimensional generalized fractional variational problem,
we introduce the notion of generalized fractional gradient.

\begin{definition}
Let $n\in\N$ and $P=(P_1,\dots,P_n)$, $P_i=\langle a_i,t_,b_i,\lambda_i,\mu_i\rangle$.
The generalized fractional gradient of a function $f:\R^n\rightarrow\R$
with respect to the generalized fractional operator $T$ is defined by
\begin{equation*}
\nabla_{T_P}[f]:=\sum\limits_{i=1}^n e_i\cdot T_{P_i}[f],
\end{equation*}
where $\left\{e_i:i=1,\dots,n\right\}$
denotes the standard basis in $\R^n$.
\end{definition}

For $n\in\N$ let us assume that $P_i=\langle a_i,t_i,b_i,\lambda_i,\mu_i \rangle$
and $P=(P_1,\dots,P_n)$, $y:\R^n\rightarrow\R$, and $\zeta:\partial\Omega_n\rightarrow\R$
is a given function. Consider the following functional:
\begin{equation}
\label{eq:F:SEV}
\fonction{\mathcal{I}}{\mathcal{A}(\zeta)}{\R}{y}{\displaystyle \int\limits_{\Omega_n}
F(y(t),\nabla_{K_P}[y](t),\nabla[y](t),\nabla_{B_P}[y](t),t)\;dt,}
\end{equation}
where
$$
\mathcal{A}(\zeta):=\left\{y\in C^1(\bar{\Omega}_n;\R):\left.y\right|_{\partial\Omega_n}
=\zeta,~K_{P_i}[y],B_{P_i}[y]\in C(\bar{\Omega}_n;\R),i=1,\dots,n\right\},
$$
$\nabla$ denotes the classical gradient operator, and $\nabla_{K_P}$ and $\nabla_{B_P}$
are generalized fractional gradient operators such that $K_{P_i}$ is the generalized
partial fractional integral with kernel $k_i=k_i(t_i-\tau)$, $k_i\in L^1(0,b_i-a_i;\R)$,
and $B_{P_i}$ is the generalized partial fractional derivative of Caputo type satisfying
$B_{P_i}=K_{P_i}\circ\frac{\partial}{\partial t_i}$, $i=1,\dots,n$.
Moreover, we assume that $F$ is a Lagrangian
\begin{equation*}
\fonction{F}{\R\times\R^{3n}\times
\bar{\Omega}_n}{\R}{(x_1,x_2,x_3,x_4,t)}{F(x_1,x_2,x_3,x_4,t)}
\end{equation*}
of class $C^1$ and
\begin{itemize}
\item $K_{P_i^*}\left[\partial_{1+i}
F(y(\tau),\nabla_{K_P}[y](\tau),\nabla[y](\tau),\nabla_{B_P}[y](\tau),\tau)\right]
\in C(\bar{\Omega}_n;\R)$,

\item $t\mapsto\partial_{1+n+i} F(y(t),\nabla_{K_P}[y](t),
\nabla[y](t),\nabla_{B_P}[y](t),t)\in C^1(\bar{\Omega}_n;\R)$,

\item $K_{P_i^*}\left[\partial_{1+2n+i}
F(y(\tau),\nabla_{K_P}[y](\tau),\nabla[y](\tau),
\nabla_{B_P}[y](\tau),\tau)\right]\in C^1(\bar{\Omega}_n;\R)$,
\end{itemize}
$i=1,\ldots,n$.

The following theorem states that if a function minimizes functional \eqref{eq:F:SEV},
then it necessarily must satisfy \eqref{eq:EL:SEV}. This means that equation
\eqref{eq:EL:SEV} determines candidates to solve the problem
of minimizing functional \eqref{eq:F:SEV}.

\begin{thm}
{\rm (cf. \cite{FVC_Sev})}
Suppose that $\y\in\mathcal{A}(\zeta)$ is a minimizer of \eqref{eq:F:SEV}.
Then, $\y$ satisfies the following generalized Euler--Lagrange equation:
\begin{multline}
\label{eq:EL:SEV}
\partial_1 F(\star_{\y})(t)+\sum\limits_{i=1}^n
\Biggl(K_{P_i^*}[\partial_{1+i}F(\star_{\y})(\tau)](t)\\
-\frac{\partial}{\partial t_i}\left(\partial_{1+n+i}F(\star_{\y})(t)\right)
-A_{P_i^*}[\partial_{1+2n+i}F(\star_{\y})(\tau)](t)\Biggr)=0,
\end{multline}
$t\in\Omega_n$, where
$(\star_{\y})(t)=(\y(t),\nabla_{K_P}[\y](t),\nabla[\y](t),\nabla_{B_P}[\y](t),t)$.
\end{thm}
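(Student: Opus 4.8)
The plan is to adapt the one–variable argument behind Theorem~3.2 to the present multidimensional setting, using the classical divergence theorem (integration by parts in each coordinate $t_i$) for the ordinary gradient term and a Fubini-type duality between $K_{P_i}$ and $K_{P_i^*}$ for the generalized fractional terms. First I would fix an admissible variation: take $\eta\in C^1(\bar{\Omega}_n;\R)$ with $\eta|_{\partial\Omega_n}=0$ and $K_{P_i}[\eta],B_{P_i}[\eta]\in C(\bar{\Omega}_n;\R)$ for $i=1,\dots,n$ (the analogue $\mathcal{A}(0)$ of $\mathcal{A}(\zeta)$ with $\zeta\equiv 0$), so that $\y+h\eta\in\mathcal{A}(\zeta)$ for all sufficiently small $|h|$. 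Since $\y$ is a minimizer, the real-valued function $h\mapsto\phi(h):=\mathcal{I}(\y+h\eta)$ attains a local minimum at $h=0$. The standing regularity hypotheses on $F$ and on the compositions $K_{P_i^*}[\partial_{1+i}F(\cdots)]$, $t\mapsto\partial_{1+n+i}F(\cdots)$, $K_{P_i^*}[\partial_{1+2n+i}F(\cdots)]$, together with the linearity of $K_{P_i}$, $\nabla_{K_P}$ and $\nabla_{B_P}$, allow differentiation under the integral sign, so $\phi$ is $C^1$ near $0$ and $\phi'(0)=0$. Applying the chain rule and using that the $i$th components of $\nabla_{K_P}[\eta]$, $\nabla[\eta]$, $\nabla_{B_P}[\eta]$ are $K_{P_i}[\eta]$, $\partial\eta/\partial t_i$, $B_{P_i}[\eta]$, this reads
$$
0=\int_{\Omega_n}\Bigl[\partial_1 F(\star_{\y})\,\eta+\sum_{i=1}^n\partial_{1+i}F(\star_{\y})\,K_{P_i}[\eta]+\sum_{i=1}^n\partial_{1+n+i}F(\star_{\y})\,\frac{\partial\eta}{\partial t_i}+\sum_{i=1}^n\partial_{1+2n+i}F(\star_{\y})\,B_{P_i}[\eta]\Bigr]\,dt.
$$

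The core of the proof is then to rewrite each of the four contributions so that $\eta$ appears as a common factor. For the classical term $\partial_{1+n+i}F(\star_{\y})\,\partial\eta/\partial t_i$, integration by parts in $t_i$ (equivalently the divergence theorem on $\Omega_n$) moves the derivative onto $\partial_{1+n+i}F(\star_{\y})$ and produces a surface integral over $\partial\Omega_n$ that vanishes because $\eta|_{\partial\Omega_n}=0$; this step is legitimate by the assumption that $t\mapsto\partial_{1+n+i}F(\star_{\y})(t)$ is $C^1$. For the pure generalized-integral term, the key ingredient is a multidimensional generalized fractional integration-by-parts (Fubini) identity asserting that $\int_{\Omega_n} g\cdot K_{P_i}[\eta]\,dt=\int_{\Omega_n}\eta\cdot K_{P_i^*}[g]\,dt$ for the $L^1$ kernels $k_i=k_i(t_i-\tau)$ under consideration; applied with $g=\partial_{1+i}F(\star_{\y})$ it turns that sum into $\sum_i K_{P_i^*}[\partial_{1+i}F(\star_{\y})]\,\eta$. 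Finally, since $B_{P_i}=K_{P_i}\circ\partial/\partial t_i$, the same Fubini identity converts $\int_{\Omega_n}\partial_{1+2n+i}F(\star_{\y})\,B_{P_i}[\eta]\,dt$ into $\int_{\Omega_n}K_{P_i^*}[\partial_{1+2n+i}F(\star_{\y})]\,\partial\eta/\partial t_i\,dt$; one more integration by parts in $t_i$ (valid by the $C^1$ assumption on $K_{P_i^*}[\partial_{1+2n+i}F(\star_{\y})]$, with boundary term again annihilated by $\eta|_{\partial\Omega_n}=0$) produces $-\,\frac{\partial}{\partial t_i}\bigl(K_{P_i^*}[\partial_{1+2n+i}F(\star_{\y})]\bigr)\,\eta=-A_{P_i^*}[\partial_{1+2n+i}F(\star_{\y})]\,\eta$, by the very definition $A_{P_i^*}=\frac{\partial}{\partial t_i}\circ K_{P_i^*}$.

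Collecting the four pieces, the identity $\phi'(0)=0$ becomes $\int_{\Omega_n} E(t)\,\eta(t)\,dt=0$ for every admissible $\eta$, where $E(t)$ is exactly the left-hand side of \eqref{eq:EL:SEV}; continuity of $E$ on $\Omega_n$ follows from the standing regularity assumptions. A standard application of the fundamental lemma of the calculus of variations in several variables (choosing $\eta$ supported in an arbitrarily small ball around a given point of $\Omega_n$) then forces $E(t)=0$ for all $t\in\Omega_n$, which is precisely \eqref{eq:EL:SEV}. I expect the main obstacle to be the rigorous justification of the multidimensional fractional Fubini identity $\int g\cdot K_{P_i}[\eta]=\int\eta\cdot K_{P_i^*}[g]$ and of the interchange of differentiation and integration defining $\phi'(0)$ under only $L^1$ integrability of the kernels — once these analytic points are secured (they are one-variable statements applied in the $\tau$-section, then integrated in the remaining variables by ordinary Fubini), the remaining steps are the routine variational bookkeeping sketched above, carried out coordinate by coordinate.
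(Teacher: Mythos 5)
Your argument is correct and is essentially the proof of this theorem as given in the cited source \cite{FVC_Sev} (the paper itself omits the proof, deferring to that reference): first variation, the duality identity $\int_{\Omega_n} g\cdot K_{P_i}[\eta]\,dt=\int_{\Omega_n}\eta\cdot K_{P_i^*}[g]\,dt$ obtained from Fubini's theorem, classical integration by parts in $t_i$ with boundary terms killed by $\eta|_{\partial\Omega_n}=0$, and the several-variables fundamental lemma. All signs and the identification $-\frac{\partial}{\partial t_i}\circ K_{P_i^*}=-A_{P_i^*}$ check out, so no further changes are needed.
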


\fudl

\example
Consider a medium motion whose displacement may be described as a scalar function $y(t,x)$,
where $x=(x^1,x^2)$. For example, this function may represent the transverse displacement of a membrane.
Suppose that the kinetic energy $T$ and the potential energy $V$ of the medium are given by
\begin{equation*}
T\left(\frac{\partial y(t,x)}{\partial t}\right)
=\frac{1}{2}\int\rho(x)\left(\frac{\partial y(t,x)}{\partial t}\right)^2\;dx,
\end{equation*}
\begin{equation*}
V(y)=\frac{1}{2}\int k(x)\left|\nabla [y](t,x)\right|^2\;dx,
\end{equation*}
where $\rho(x)$ is the mass density and $k(x)$ is the stiffness,
both assumed positive. Then, the classical action functional is
\begin{equation*}
\mathcal{I}(y)=\frac{1}{2}\int\limits_{\Omega}\left(\rho(x)\left(
\frac{\partial y(t,x)}{\partial t}\right)^2
-k(x)\left|\nabla [y](t,x)\right|^2\right)\;dxdt.
\end{equation*}
We shall illustrate what are the Euler--Lagrange equations when
the Lagrangian density depends on generalized fractional derivatives.
When we have the Lagrangian with the kinetic term depending
on the operator $B_{P_1}$, with $P_1=\langle a_1,t,b_1,\lambda,\mu\rangle$,
then the fractional action functional has the form
\begin{equation}
\label{eq:F:SEVex}
\mathcal{I}(y)=\frac{1}{2}\int\limits_{\Omega_3}\left[\rho(x)\left(
B_{P_1}[y](t,x)\right)^2-k(x)\left|\nabla [y](t,x)\right|^2\right]\;dxdt.
\end{equation}
The fractional Euler--Lagrange equation
satisfied by an extremal of \eqref{eq:F:SEVex} is
\begin{equation*}
-\rho(x) A_{P_1^*}\left[B_{P_1}[y](\tau,s)\right](t,x)
-\nabla\left[k(s)\nabla[y](\tau,s)\right](t,x)=0.
\end{equation*}
If $\rho$ and $k$ are constants, then
$\rho A_{P_1^*}\left[B_{P_1}[y](\tau,s)\right](t,x)
+c^2\Delta[y](t,x)=0$,
where $c^2=k/\rho$, can be called \emph{the generalized time-fractional wave equation}.
Now, assume that the kinetic energy and the potential energy depend on $B_{P_1}$
and $\nabla_{B_P}$ operators, respectively, where $P=(P_{2},P_{3})$.
Then, the action functional for the system has the form
\begin{equation}
\label{eq:F:SEVex2}
\mathcal{I}(y)=\frac{1}{2}\int\limits_{\Omega_3}\left[
\rho\left(B_{P_1}[y](t,x)\right)^2
-k\left|\nabla_{B_P}[y](t,x)\right|^2\right]\;dxdt.
\end{equation}
The fractional Euler--Lagrange equation
satisfied by an extremal of \eqref{eq:F:SEVex2} is
\begin{equation*}
-\rho  A_{P_1^*}\left[B_{P_1}[y](\tau,s)\right](t,x)
+\sum\limits_{i=2}^3 A_{P_{i}^*}\left[B_{P_{i}}[y](\tau,s)\right](t,x)=0.
\end{equation*}
If $\rho$ and $k$ are constants, then
\begin{equation*}
A_{P_1^*}\left[B_{P_1}[y](\tau,s)\right](t,x)-c^2\left(
\sum\limits_{i=2}^3 A_{P_{i}^*}\left[kB_{P_{i}}[y](\tau,s)\right](t,x)\right)=0
\end{equation*}
can be called \emph{the generalized space- and time-fractional wave equation}.

\begin{cor}
Let $\alpha=(\alpha_1,\dots,\alpha_n)\in(0,1)^n$ and let
$\y\in C^1(\bar{\Omega}_n;\R)$ be a minimizer of the functional
\begin{equation*}
\mathcal{I}(y)=\int\limits_{\Omega_n} F(y(t),
\nabla_{I^{1-\alpha}}[y](t),\nabla[y](t),\nabla_{D^\alpha}[y](t),t)\;dt
\end{equation*}
satisfying $\left.y(t)\right|_{\partial \Omega_n}=\zeta(t)$,
where $\zeta:\partial\Omega_n\rightarrow\R$ is a given function,
\begin{equation*}
\nabla_{I^{1-\alpha}}=\sum\limits_{i=1}^n e_i\cdot\Ilcp,
~~\nabla_{D^\alpha}=\sum\limits_{i=1}^n e_i\cdot\Dclp,
\end{equation*}
$F$is of class $C^1$, and
\begin{itemize}
\item $\Ircp\left[\partial_{1+i} F(y(\tau),\nabla_{I^{1-\alpha}}[y](\tau),
\nabla[y](\tau),\nabla_{D^\alpha}[y](\tau),\tau)\right]$
is continuous on $\bar{\Omega}_n$,

\item $t\mapsto\partial_{1+n+i} F(y(t),\nabla_{I^{1-\alpha}}[y](t),
\nabla[y](t),\nabla_{D^\alpha}[y](t),t)$
is continuously differentiable on $\bar{\Omega}_n$,

\item $\Ircp\left[\partial_{1+2n+i} F(y(\tau),\nabla_{I^{1-\alpha}}[y](\tau),
\nabla[y](\tau),\nabla_{D^\alpha}[y](\tau),\tau)\right]$
is continuously differentiable on $\bar{\Omega}_n$.
\end{itemize}
Then, $\y$ satisfies the following fractional Euler--Lagrange equation:
\begin{multline*}
\partial_1 F(\y(t),\nabla_{I^{1-\alpha}}[\y](t),\nabla[\y](t),\nabla_{D^\alpha}[\y](t),t)\\
+\sum\limits_{i=1}^n\Biggl(\Ircp\left[\partial_{1+i}F(\y(\tau),
\nabla_{I^{1-\alpha}}[\y](\tau),\nabla[\y](\tau),\nabla_{D^\alpha}[\y](\tau),\tau)\right](t)\\
-\frac{\partial}{\partial t_i}\left(\partial_{1+n+i}F(\y(t),
\nabla_{I^{1-\alpha}}[\y](t),\nabla[\y](t),\nabla_{D^\alpha}[\y](t),t)\right)\\
+\Drp\left[\partial_{1+2n+i}F(\y(\tau),\nabla_{I^{1-\alpha}}[\y](\tau),
\nabla[\y](\tau),\nabla_{D^\alpha}[\y](\tau),\tau)\right](t)\Biggr)=0,
\end{multline*}
$t\in\Omega_n$.
\end{cor}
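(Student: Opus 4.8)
The plan is to obtain this corollary as a direct specialization of the multidimensional generalized Euler--Lagrange theorem behind~\eqref{eq:EL:SEV}, choosing the kernels that turn the generalized partial operators into the standard Riemann--Liouville and Caputo ones. First I would fix, for each $i=1,\dots,n$,
$$
k_i(t_i,\tau)=\frac{1}{\Gamma(1-\alpha_i)}(t_i-\tau)^{-\alpha_i},
\qquad P_i=\langle a_i,t_i,b_i,1,0\rangle .
$$
Since $\alpha_i\in(0,1)$, the map $s\mapsto \frac{1}{\Gamma(1-\alpha_i)}s^{-\alpha_i}$ lies in $L^1(0,b_i-a_i;\R)$ and is of convolution type, so this is an admissible kernel for functional~\eqref{eq:F:SEV}. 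With this choice $\PKi=\Ilcp$, hence $\PBi=\PKi\circ\frac{\partial}{\partial t_i}=\Dclp$ by the example that identifies the standard partial fractional operators; moreover $\Ilcp[y]$ and $\Dclp[y]$ are continuous on $\bar{\Omega}_n$ whenever $y\in C^1(\bar{\Omega}_n;\R)$, so the admissible class $\mathcal{A}(\zeta)$ here is precisely the one attached to~\eqref{eq:F:SEV}. Thus $\nabla_{K_P}=\nabla_{I^{1-\alpha}}$ and $\nabla_{B_P}=\nabla_{D^\alpha}$, and the functional in the corollary is exactly~\eqref{eq:F:SEV} for this data.

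Next I would identify the dual operators occurring in~\eqref{eq:EL:SEV}. The dual set of $P_i$ is $P_i^*=\langle a_i,t_i,b_i,0,1\rangle$, and the one-variable examples, transferred verbatim to the $i$th coordinate, give $K_{P_i^*}=\Ircp$ (the right Riemann--Liouville partial integral of order $1-\alpha_i$) together with $A_{P_i^*}=\frac{\partial}{\partial t_i}\circ K_{P_i^*}=-\Drp$, that is, $-A_{P_i^*}=\Drp$. With these substitutions the three standing hypotheses of the theorem behind~\eqref{eq:EL:SEV} --- continuity of $K_{P_i^*}[\partial_{1+i}F(\cdots)]$, $C^1$-regularity of $t\mapsto\partial_{1+n+i}F(\cdots)$, and $C^1$-regularity of $K_{P_i^*}[\partial_{1+2n+i}F(\cdots)]$ --- become word for word the three bulleted assumptions in the statement of the corollary, so nothing further needs to be checked.

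Finally I would substitute these identifications into~\eqref{eq:EL:SEV}: the term $K_{P_i^*}[\partial_{1+i}F(\star_{\y})(\tau)](t)$ becomes $\Ircp[\partial_{1+i}F(\cdots)](t)$, the term $-\frac{\partial}{\partial t_i}\bigl(\partial_{1+n+i}F(\star_{\y})(t)\bigr)$ is unchanged, and $-A_{P_i^*}[\partial_{1+2n+i}F(\star_{\y})(\tau)](t)$ becomes $+\Drp[\partial_{1+2n+i}F(\cdots)](t)$; expanding $(\star_{\y})(t)=(\y(t),\nabla_{I^{1-\alpha}}[\y](t),\nabla[\y](t),\nabla_{D^\alpha}[\y](t),t)$ then yields the asserted fractional Euler--Lagrange equation for $t\in\Omega_n$. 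The only step that is not purely mechanical is the sign bookkeeping for the right-sided operators --- the convention $-A_{P_i^*}=\Drp$ inherited from the one-dimensional examples --- but once that is in place the corollary follows with no computation beyond this specialization.
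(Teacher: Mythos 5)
Your specialization is exactly how the paper obtains this corollary: it is stated as an immediate consequence of the multidimensional theorem behind \eqref{eq:EL:SEV}, with the kernel $k_i(t_i,\tau)=\frac{1}{\Gamma(1-\alpha_i)}(t_i-\tau)^{-\alpha_i}$ and $P_i=\langle a_i,t_i,b_i,1,0\rangle$, so that $K_{P_i}=\Ilcp$, $B_{P_i}=\Dclp$, $K_{P_i^*}=\Ircp$ and $-A_{P_i^*}=\Drp$. Your sign bookkeeping for the right-sided operators and your check that the convolution kernel lies in $L^1(0,b_i-a_i;\R)$ are both correct, so the argument is complete and matches the paper's route.
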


\fudl


\section{Conclusion}
\label{sec:conc}

During the last two decades, fractional differential
equations have increasingly attracted the attention of many
researchers: many mathematical problems in science
and engineering are represented by these kinds of equations.
Fractional calculus is a useful tool
for modeling complex behaviors of physical systems from diverse domains such as
mechanics, electricity, chemistry, biology, economics, and many others.
Science Watch of Thomson Reuters identified this area as an Emerging Research Front.
The Fractional Calculus of Variations (FCV) consider a new class of variational
functionals that depend on fractional derivatives and/or fractional integrals \cite{book:AD}.
Here we reviewed necessary conditions of optimality for problems of the FCV
with generalized operators \cite{Odz:PhD,MyID:226,FVC_Gen_Int,FVC_Sev,GreenThm,NoetherGen}.
The study of such variational problems is a subject
of strong current study because of its numerous applications.
The FCV is a fascinating and beautiful subject,
still in its childhood. It was created in 1996 in order to better
describe nonconservative systems in mechanics. The inclusion
of nonconservatism is extremely important from the point of
view of applications. Forces that do not store energy are always
present in real systems. They remove energy from the systems
and, as a consequence, Noether's conservation laws cease to
be valid. However, as we have proved here, it is still possible to obtain the validity of
Noether's principle using FCV. The new theory provides a more
realistic approach to physics, allowing us to consider nonconservative
systems in a natural way. To go further into the subject,
we refer the reader to \cite{book:AD,Odz:PhD} and references therein.


\section*{Acknowledgments}

This work was partially supported by Portuguese funds through the
\emph{Center for Research and Development in Mathematics and Applications} (CIDMA),
and \emph{The Portuguese Foundation for Science and Technology} (FCT),
within project PEst-OE/MAT/UI4106/2014.
The authors were also supported by EU funding under the
7th Framework Programme FP7-PEOPLE-2010-ITN,
grant agreement number 264735-SADCO, and project OCHERA,
PTDC/EEI-AUT/1450/2012, co-financed by FEDER under POFC-QREN
with COMPETE reference FCOMP-01-0124-FEDER-028894.



\end{document}